\documentclass[11pt]{amsart}

\usepackage[margin=1in]{geometry}

\usepackage{amsmath, amscd, amsthm, amssymb,hyperref,comment}

\def\C{{\mathbf C}}
\def\R{{\mathbf R}}
\def\Z{{\mathbf Z}}
\def\Q{{\mathbf Q}}
\def\A{{\mathbf A}}
\def\g{{\mathfrak g}}
\def\m{{\mathfrak m}}
\def\n{{\mathfrak n}}

\def\sl{{\mathfrak sl}}

\def\O{{\mathbb O}}

\def\H{{\mathcal{H}}}

\newtheorem{theorem}{Theorem}[section]

\newtheorem{lemma}[theorem]{Lemma}
\newtheorem{proposition}[theorem]{Proposition}
\newtheorem{corollary}[theorem]{Corollary}
\newtheorem{claim}[theorem]{Claim}

\theoremstyle{definition}
\newtheorem{definition}[theorem]{Definition}

\theoremstyle{remark}
\newtheorem{remark}[theorem]{Remark}

\newcommand{\mm}[4]{\left(\begin{smallmatrix} #1 & #2\\ #3 & #4\end{smallmatrix}\right)}

\DeclareMathOperator{\tr}{tr}

\DeclareMathOperator{\SO}{SO}

\DeclareMathOperator{\Sp}{Sp}

\DeclareMathOperator{\GL}{GL}

\DeclareMathOperator{\End}{End}

\begin{document}
	\title{Automatic convergence for holomorphic modular forms}
	\author{Aaron Pollack}
	\address{Department of Mathematics\\ University of California San Diego\\ La Jolla, CA USA}
	\email{apollack@ucsd.edu}
	\thanks{Funding information: AP has been supported by the NSF via grant number 2144021.}
	
	\begin{abstract} We prove an automatic convergence theorem for holomorphic modular forms on tube domains.  The argument works in some generality, and covers in particular the case of orthogonal groups, symplectic groups, unitary and quaternion unitary groups, and the exceptional group $E_7$.  
	\end{abstract}
	
	\maketitle
	
	\setcounter{tocdepth}{1}
	\tableofcontents
\section{Introduction} 
The purpose of this paper is to prove an \emph{automatic convergence} theorem for the holomorphic automorphic forms on tube domains in many cases.  We now setup the main results and explain what this means.

\subsection{Holomorphic automorphic forms}
Suppose $J$ is a finite-dimensional Jordan algebra over $\Q$ with $J(\R)$ Euclidean.  One defines the tube domain
\[
\H_J = \{X +iY: X, Y \in J(\R), Y  > 0\}.
\]
Via the Kantor-Koecher-Tits construction, out of $J$ one can create a semisimple Lie algebra $\g_J$ over $\Q$.  Let $G_J$ be a semisimple algebraic group over $\Q$ with Lie algebra $\g_J$.  Then $\H_J$ is the symmetric space associated to the identity component $G_J(\R)^0$.

We prove a theorem about holomorphic modular forms on $\H_J$, or equivalently, about certain automorphic forms on $G_J$.  We now specify the $J$ that we allow.

Suppose $C$ is a rational composition algebra with positive-definite norm.  If $C$ has associative multiplication, then $C$ is either $\Q$, an imaginary quadratic field, or a quaternion algebra ramified at the archimedean place.  If $C$ does not have associative multiplication, then $C = \O$ is the unique rational octonion algebra with positive-definite norm.  Let $H_k(C)$ denote the $k \times k$ Hermitian matrices over the composotion algebra $C$.  We will consider $J$ as in Table \ref{table:globalNotation}.

\begin{table}[h!]
    \centering
    \caption{Global notation}
    \label{table:globalNotation}
\[
\begin{array}{|c|c|c|c|c|c|}
	\hline
	\text{case}&  J  & 1_J & J_1 & r  & e \\
	\hline
1 & H \oplus V_0 & b_{2}+b_{-2} & \Q & 2 & b_{2} = e_{11}\\ 
	\hline
2 & H_n(C) & \sum_{j=1}^{n} e_{jj} & H_{n-1}(C) & n & e_{11} \\
	\hline
3&  H_3(\O) & \sum_{j=1}^{3} e_{jj} & H_2(\O) & 3 & e_{11} \\
\hline
\end{array}
\]
\end{table}

In the second row of Table \ref{table:globalNotation}, $V_0$ denotes a rational quadratic space of dimension at least $1$ with negative definite norm and $H = \Q b_{2} \oplus \Q b_{-2}$ denotes a hyperbolic plane, so that the pairing $(b_2, b_{-2}) =1$.  To make notation consistent between cases, we set $e_{11} = b_2$ and $e_{22} = b_{-2}$.  

The table also defines other notation that will be used below.  In particular, for an integer $1 \leq j \leq k$, $e_{jj}$ denotes the $j^{th}$ diagonal element of $H_k(C)$ and the positive integer $r$ denotes the rank of the Jordan algebra.  In the case of $J = H \oplus V_0$, the identity $1_J$ of $J$ is $1_J = b_2 + b_{-2}$.  In the other cases, $1_J = \sum_{j=1}^{r} e_{jj}$.

In the first case, one can take for the group $G_J$ the special orthogonal group $\SO(V)$, where $V = H^2 \oplus V_0$, or an isogenous group.  In the second case, for $G_J$ one can take 
\[
G_J = \{g \in \GL_{2n,C}: g^* J_n g = J_n\}
\]
where $J_n = \mm{0_n}{1_n}{-1_n}{0_n}$, or an isogenous group.  This covers the case of symplectic groups $\Sp(2n)$, unitary groups $U(n,n)$, and quaternion unitary groups $\SO^*(4n)$, depending on if $C=\Q$, an imaginary quadratic field, or a quaternion algebra ramified at infinity, respectively.  In the third case, for $G_J$ one can take a group that is exceptional of type $E_7$ with real and rational rank equal to $3$.

The algebraic groups $G=G_J$ have a class of holomorphic automorphic forms which we now describe.  Let $P \subseteq G$ denote the Siegel parabolic subgroup of $G$, so that the unipotent radical $N_P$ of $P$ is abelian and can be identified with $J$.  We write
\[ n: J \rightarrow N_P\]
for this isomorphism.  Let $M_P$ denote a standard Levi factor of $P$. There is a factor of automorphy\footnote{The notation ``$J$" for Jordan algebra and factor of automorphy are both so standard that it seems unwise to use a different notation for one or the other.  Hopefully no confusion will arise by using the same notation for both, as the meaning will be clear from context.} 
\[J: G(\R)^0 \times \H_J \rightarrow M_P(\C).\]
Let $1_J \in J$ be the unit of the Jordan algebra so that $i 1_J \in \H_J$.  Let $K^0 \subseteq G(\R)^0$ denote the stabilizer of $i 1_J \in \H_J$.  This is a maximal compact subgroup of $G(\R)^0$.

\begin{definition} Let $\rho$ be a finite-dimensional representation of $M_P(\C)$ on a complex vector space $V_\rho$.   An automorphic function $\varphi: G(\Q)\backslash G(\A) \rightarrow V_\rho$ is said to be a holomorphic automorphic form of weight $\rho$ if, for every $g_f \in G(\A_f)$, the function
\[
g \mapsto \rho(J(g,i 1_J)) \varphi(gg_f)
\]
from $G(\R)^0$ to $V_\rho$ is right-invariant by $K^0$, and the induced function $F_{\varphi,g_f}: \H_J \rightarrow V_\rho$ is holomorphic.  If $U \subseteq G(\A_f)$ is a compact open subgroup, we write $M_{\rho}(U)$ for the space of holomorphic automorphic forms on $G_J$ of weight $\rho$ that are right-invariant by $U$.  Likewise we write $S_{\rho}(U)$ for the subspace of cuspidal forms.
\end{definition}

If $\varphi$ is a holomorphic automorphic form on $G$, then $F_{\varphi,g_f}$ has a classical Fourier expansion:
\[
F_{\varphi,g_f}(Z) = \sum_{T \in J^\vee(\Q), T \geq 0} a_T(g_f) q^T
\]
where $q^T$ is shorthand for $e^{2\pi i (T, g \cdot (i 1_J))}$.  Here $J^\vee$ denotes the linear dual of $J$.  The notation $T \geq 0$ means that $T$ is positive semi-definite in $J^\vee(\R)$.  The functions $a_{T}: G(\A_f) \rightarrow V_\rho$ are locally constant.  One calls $a_T$ the $T^{th}$ Fourier coefficient of $\varphi$.

\emph{For the rest of the paper we now assume that the group of real points $G(\R)$ is connected}, which happens, for example, when $G$ is simply connected.  This is just for simplicity, to avoid making arguments that depend upon component groups.  The Fourier coefficients $a_T$ satisfy relations coming from automorphy along $P$: If $\gamma \in M_P(\Q)$ then
\[
a_T(\gamma_f g_f) = \rho(J(\gamma_\infty, i 1_J)) a_{T \cdot \gamma}(g_f)
\]
for all $g_f \in G(\A_f)$.  Here $\gamma = (\gamma_f,\gamma_\infty) \in M_P(\A_f) \times M_P(\R)$.

The groups $G$ have another conjugacy class of maximal parabolic subgroup.  Let 
\[n_L: J \rightarrow \g_J\]
denote the logarithm of the map $n: J\rightarrow G$ so that $n(x) = \exp(n_L(x))$ for $x \in J$.  The element $n_L(e)$ for $e$ as in Table \ref{table:globalNotation} spans the highest root space for $\g_J$ for a particular choice of minimal parabolic subgroup of $G$.  Let $Q = M_Q N_Q \subseteq G$ denote the stabilizer of the line $\Q e \subseteq \g_J$.  The subgroup $Q$ is a maximal parabolic subgroup of $G$.  The unipotent radical $N_Q$ is a Heisenberg group. Let $Z$ denote its center, so $\mathrm{Lie}(Z) = \Q e$.  Fix a standard additive character $\psi: \Q\backslash \A \rightarrow \C^\times$ with $\psi_\infty(x) = e^{2\pi i x}$.  If $m \in \Q$, define $\varphi_m: G(\A) \rightarrow V_\rho$ as
\[
\varphi_m(g) = \int_{\Q\backslash \A}{\psi(m z)^{-1}\varphi( n(ze) g)\,dz}.
\]
We call this the $m^{th}$ Fourier-Jacobi coefficient of $\varphi$.  It is a holomorphic automorphic Jacobi form of weight $\rho$ and index $m$. See Definition \ref{def:JacobiForms} below. If $g \in G(\R)$ and $g_f \in G(\A_f)$, set 
\[F_{\varphi,g_f,m}(g) = \rho(J(g, i 1_J)) \varphi_m(g g_f).\]
One has
\[
F_{\varphi,g_f,m}(g) = \sum_{T \in J^\vee(\Q), (T,e) = m} a_T(g_f) q^T.
\]
We also write $F_{\varphi,g_f,m}(Z) = F_{\varphi,g_f,m}(g)$ if $g \cdot (i1_J) = Z$.  The function $F_{\varphi,g_f,m}(Z)$ is a holomorphic Jacobi form of weight $\rho$ and index $m$.  Let $Q' \subseteq Q$ denote the centralizer of $Z$.  We have $Q' = N_Q M_Q'$ where $M_Q'$ is the centralizer of $Z$ in $M_Q$. One has $\varphi_m(\gamma g) = \varphi_m(g)$ for all $\gamma \in Q'(\Q)$.

Here is our definition of Jacobi forms.
\begin{definition}\label{def:JacobiForms} Suppose $U$ is an open compact subgroup of $G(\A_f)$ and $Y \subseteq G(\A_f)$ is a subset with $YU = Y$ and $Q'(\A_f) Y = Y$.  A smooth, moderate growth function $\eta: Y \times G(\R) \rightarrow V_{\rho}$ is said to be a holomorphic automorphic Jacobi form of weight $\rho$, index $m$, level $U$, and domain $Y$, written $\eta \in MJ_{\rho,m}(Y,U)$, if it satisfies the following properties:
\begin{enumerate}
    \item $\eta(n(ze) g) = \psi(mz) \eta(g)$ for all $z \in \A$;
    \item For every $g_f \in Y$, the function $F_{g_f}: G(\R) \rightarrow V_{\rho}$ defined as 
    \[F_{g_f}(g) = \rho(J(g,i 1_J)) \eta(g_f g)\]
    descends to $\H_J$ and is holomorphic there;
    \item one has $\eta(hu) = \eta(h)$ for all $u \in U$ and $h \in Y \times G(\R)$;
    \item $\eta(\delta h) = \eta(h)$ for all $\delta \in Q'(\Q)$ and $h \in Y \times G(\R)$.
\end{enumerate}
If the constant term of $\eta$ along the unipotent radical of every standard parabolic subgroup is $0$, we say that $\eta$ is cuspidal.  The subspace of cuspidal forms is denoted $SJ_{\rho,m}(Y,U)$.
\end{definition}

Holomorphic Jacobi forms have a Fourier expansion
\[
F_{g_f}(g) = \sum_{T \in J^\vee(\Q), (T,e) = m} a_T(g_f) q^{T}.
\]

\subsection{Formal modular forms}
The Fourier and Fourier-Jacobi expansions of a holomorphic automorphic form give rise to the following notion, considered by \cite{aoki, IPY, bruinierGenus2, raumGenus2, bruinerRaum2015, bruinierRaum2024, pollackAutConvSMF, AIP, bruinierRaumCorrection} in the case of the symplectic groups.  They are also considered in \cite{wangFreeAlgebrasOMFs} for a handful of orthogonal groups and in \cite{xiaUnitary} for unitary groups whose associated imaginary quadratic field is norm-Euclidean.

For a subgroup $S$ of $G$, denote $S(\Q)_f$ the image of $S(\Q)$ in $S(\A_f) \subseteq G(\A_f)$.
\begin{definition}\label{def:formalMFs} Suppose $U \subseteq G(\A_f)$ is an open compact subgroup, and suppose $X \subseteq G(\A_f)$ is an open subset satisfying $XU = X$ and $G(\A_f) = G(\Q)_{f} X$.   Let $\rho: M_P(\C) \rightarrow \GL(V_{\rho})$ be a finite-dimensional representation.  A collection of functions $a_T: X \rightarrow V_\rho$, one for each $T \in J^\vee(\Q)$, is said to be a \emph{formal modular form of weight $\rho$, level $U$ and domain $X$} if it satisfies the following properties:
\begin{enumerate}
    \item $a_T \equiv 0$ unless $T \geq 0$.
    \item $a_T(xu) = a_T(x)$ for all $T \in J^\vee(\Q), x \in X$ and $u \in U$.
    \item \label{item:propertyPsymms} If $v \in J(\A_f)$, $\gamma \in M_P(\Q)$, $x \in X$ and $n(v) \gamma_f x \in X$ then 
    \[a_T(n(v) \gamma_f x) = \psi((T,v))\rho(J(\gamma_\infty, i 1_J)) a_{T \cdot \gamma}(x).\]
    \item \label{item:propertyQsymms} Let $Y = Q'(\A_f) X$.  For every $m \in \Q$ there is a holomorphic Jacobi form $\eta_m \in MJ_{\rho,m}(Y,U)$ of index $m$, weight $\rho$ and domain $Y$ whose Fourier expansion is
    \[\eta_m(x g) = \rho(J(g, i 1_J))^{-1} \sum_{T \in J^\vee(\Q), (T,e) = m}a_T(x) q^T\]
    if $x \in X$ and $g \in G(\R)$.
    \end{enumerate}
One writes $M_\rho^{f}(X,U)$ for the $\C$-vector space of formal modular forms of weight $\rho$, domain $X$ and level $U$.
\end{definition}

Evidently, the map $\varphi \mapsto (a_T)_T$ defines an injective linear map
\[M_\rho(U) \rightarrow M_\rho^{f}(X,U)\]
from the holomorphic automorphic forms of weight $\rho$ and level $U$ to the formal modular forms of this weight and level, for any open $X$ with $G(\A_f) = G(\Q)_fX$.

Here is the main theorem of this paper. Let $R \subseteq G$ denote the parabolic subgroup defined as $P \cap Q$, and set $R' = P \cap Q'$.  Note that there are compact open subsets $X \subseteq G(\A_f)$ satisfying $P(\Q)_fX = G(\A_f) = Q(\Q)_f X$, see Lemma \ref{lem:NPMQKf} and likewise $Q'_f(\Q) X \subseteq N_P(\A_f) R'(\Q)_f X$.
\begin{theorem}\label{thm:autConvIntro} Let $U \subseteq G(\A_f)$ be a compact open subgroup and suppose $X \subseteq G(\A_f)$ is a subset satisfying 
\begin{enumerate}
    \item $XU = X$;
    \item $P(\Q) X= G(\A_f) = Q(\Q) X$;
    \item $Q'(\Q) X \subseteq N_P(\A_f) R'(\Q) X$.
\end{enumerate} Then the map $M_\rho(U) \rightarrow M_\rho^{f}(X,U)$ is a $\C$-linear isomorphism.
\end{theorem}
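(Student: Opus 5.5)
Injectivity is already noted, so the work is surjectivity: from a formal modular form $(a_T)_T$ of domain $X$ and level $U$, I will build an automorphic form $\varphi$ whose Fourier coefficients are the $a_T$. I follow the strategy used for symplectic groups: define $\varphi$ twice, once from the $P$-Fourier expansion and once from the $Q$-Fourier--Jacobi expansion, and show the two definitions agree.

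\textbf{The $P$-side definition.} For $x\in X$ and $g\in G(\R)$ I set
\[\varphi_P(xg)=\rho(J(g,i1_J))^{-1}\sum_{T}a_T(x)q^T .\]
Property (4) of Definition \ref{def:formalMFs} shows that for each fixed $m$ the partial sum over $(T,e)=m$ converges, since it is the Jacobi form $\eta_m$. Convergence of the whole series is part of what must be shown, and I return to it below.

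\textbf{The $Q$-side definition.} On $Y=Q'(\A_f)X$ I set
\[\varphi_Q(h)=\sum_m\eta_m(h).\]
Using $Q(\Q)X=G(\A_f)$ together with $Q'(\Q)$-invariance of each $\eta_m$, this extends to a $Q(\Q)$-invariant function. The point is that $M_Q(\Q)/M_Q'(\Q)$ acts on the center $Z$ by scalars and so permutes the indices $m$. For this to be well defined, I must check that the $\eta_m$ transform correctly under $M_Q(\Q)$. I would check this on Fourier coefficients: property (3) for $\gamma\in M_P\cap M_Q$ gives the required relations, and hypothesis (3), $Q'(\Q)X\subseteq N_P(\A_f)R'(\Q)X$, reduces the $Q'$-translates to $N_P$- and $R'$-translates. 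On those translates property (3) of the formal modular form applies directly.

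\textbf{Convergence (the main obstacle).} I need both series to converge absolutely and locally uniformly to the same holomorphic function. For $\varphi_Q$, I would adapt the argument of \cite{bruinierRaum2015, pollackAutConvSMF}. Each $\eta_m$ is determined by its theta decomposition. Bounds on the $\eta_m$ follow from the $P$-symmetries: those move the index $m$ into other coefficients $a_T$, with $T$ lying in a finite set modulo the unipotent and Levi actions. This should give a polynomial bound $|a_T(x)|\ll (T,1_J)^{c}$ uniformly for $x$ in the compact set $X$, and such a bound gives convergence of $\sum_T a_T q^T$ on $\H_J$.

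The key step is a Hecke-type bound. The $P$-symmetries $a_{T\cdot\gamma}$ for $\gamma\in M_P(\Q)$ preserving the lattice let me reduce any $T$ to one whose $(T,e)$ is minimal within its orbit. I then bound $a_T$ by the $L^2$/Petersson norm of $\eta_{(T,e)}$ restricted to a Jacobi fundamental domain. To control those norms uniformly in $m$, I would compare $\eta_m$ with the generating series evaluated on a Siegel set of the Levi. This reduction-theory step is where I expect the difficulty: it needs $J$ to be one of the cases in Table \ref{table:globalNotation}, so that $M_P$-reduction can bring $(T,e)$ down to a size comparable to the minimal eigenvalue.

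\textbf{Conclusion.} Once both series converge, $\varphi_P$ is left $P(\Q)$-invariant, by property (3) and hypothesis (2). It also equals $\varphi_Q$ on $X\times G(\R)$, since both are $\sum_T a_T(x)q^T$ rearranged, so it is $Q(\Q)$-invariant as well. Because $P(\Q)$ and $Q(\Q)$ generate $G(\Q)$ (distinct maximal parabolics of a simple group), $\varphi$ is $G(\Q)$-invariant. It is holomorphic, $U$-invariant, and of moderate growth by the polynomial coefficient bound, so $\varphi\in M_\rho(U)$ with Fourier coefficients $a_T$. This gives surjectivity.
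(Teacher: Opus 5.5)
Your proposal has a genuine gap in the convergence step, which is the heart of the theorem. The formal data give no a priori control of $\|\eta_m\|$ that is uniform in $m$. Bounding $a_T$ by a Petersson norm of $\eta_{(T,e)}$ only relocates the problem. ``Comparing $\eta_m$ with the generating series on a Siegel set of the Levi'' presupposes the convergence of $\sum_T a_T q^T$, which is exactly what you are trying to prove. The $P$-symmetries also do not reduce the relevant $T$ to a finite set: they give finitely many orbits only for fixed $\det(T)$, while each $\eta_m$ has coefficients of unbounded determinant.

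The missing idea is a quantitative Sturm bound for cuspidal Jacobi forms (Theorem \ref{thm:QSBtake1}). It says: if $\beta_S\le\epsilon$ on $X_JX$ for all $S$ with $\det(S)\le R\, m\log(m)^{r-1}$, then $\|\eta_m\|_Y\ll \epsilon\, m^{D}$. You already have the right second ingredient, namely moving $T$ in its $M_P$-orbit to make $(T,e)$ small. Proposition \ref{prop:redMP} makes this quantitative: $(T\cdot\gamma t,e)\le B\det(T)^{1/r}$. Combining the two, $a_T$ is controlled by coefficients $a_S$ with $\det(S)\ll \det(T)^{(1+\tau)/r}$. Since $r\ge 2$ you can take $(1+\tau)/r<1$. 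An induction on $\det(T)$ along the scales $D_0^{\delta^n}$ then closes with a polynomial bound (Claim \ref{claim:InductiveConvergence}). You also need to handle the enlarged domain $X_J F^{-1}X$, where the Sturm bound requires its input. The paper does this by translating back to $X$ via finitely many $P$-symmetries, at the cost of a bounded factor $L$ in $\det$.

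Second, even with convergence handled, your argument cannot treat non-cuspidal formal forms. Petersson norms and the Sturm bound both require cuspidality, and coefficients with $\det(T)=0$ are never reached by an induction on $\det(T)$. So the claimed bound $|a_T(x)|\ll (T,1_J)^c$ for an arbitrary formal form, and the moderate growth you deduce from it, are unsupported. The paper first proves the cuspidal case (Theorem \ref{thm:autConvCusp}). For a general $(a_T)$, it multiplies by the Fourier expansion of a Baily--Borel cusp form $f_P$ of large weight with $f_P(P)\neq 0$, applies the cuspidal result, and divides by $f_P$. Moderate growth then comes from the Koecher principle.

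Your treatment of $Q(\Q)$-invariance is essentially the paper's and is fine once convergence is known. It uses hypothesis (3) to identify the $a_T$ with the coefficients of $\eta_m$ on $Q'(\Q)X$, then passes from $Q'$ to $Q$ via the scaling $m\mapsto \nu(t)m$ for $t\in Q(\Q)\cap M_P(\Q)$.
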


Note that, because an honest holomorphic modular form $\varphi$ is of moderate growth on $G(\A)$, its Fourier coefficients $a_T(g_f)$ inherit a polynomial growth property in $T$ for every $g_f \in G(\A_f)$.  The main difficulty in proving Theorem \ref{thm:autConvIntro} is proving that if $(a_T)_T \in M_\rho^{f}(X,U)$, then the $a_T$ necessarily also grow polynomially in $T$.  For this reason, we call Theorem \ref{thm:autConvIntro} the \emph{automatic convergence theorem}: The symmetries of the Fourier coefficients of a hypothetical holomorphic modular form force the infinite sum $\sum_{T} a_T(g_f) q^T$ to converge automatically.

\begin{remark}\label{rmk:smallX?} In case $X \subseteq G(\A_f)$ satisfies $G(\A_f) = G(\Q)_{f} X$, but not necessarily $G(\A_f) = N_P(\A_f) M_P(\Q) X$ (for example, if $X=U$ is an arbitrary open compact subgroup of $G(\A_f)$ and $G$ is simply connected), one could still hope to prove that the map $M_\rho(U) \rightarrow M_\rho^{f}(X,U)$ is an isomorphism.  We do not know if this is true and seems to be an interesting question in general.  Indeed, as described below, our arguments rely heavily on reduction theory and Siegel sets.  But only for large congruence subgroups $\Gamma$ of a semisimple group $H(\R)$ is it true that every coset $\Gamma \backslash H(\R)$ has a representative in a Siegel set.  In trying to shrink $X$ in Theorem \ref{thm:autConvIntro}, one must find a way to get around this basic constraint, which we do not know how to do.
\end{remark}

\subsection{Related work} The study of formal modular forms and their modularity goes back to \cite{aoki}, and then \cite{IPY,bruinierGenus2,raumGenus2} in the case of $\Sp_4$.  For the symplectic groups $\Sp_{2n}$, formal modular forms were studied in \cite{bruinerRaum2015, bruinierRaumCorrection} and in our arXiv preprint \cite{pollackAutConvSMF}.  In these papers, the analogue of the subset $X$ of Theorem \ref{thm:autConvIntro} is equal to or of small index in the maximal compact subgroup $\Sp_{2n}(\widehat{\Z})$.  Likewise in \cite{wangFreeAlgebrasOMFs} and the paper \cite{xiaUnitary}, which prove results about formal modular forms on eight different orthgonal groups and, respectively, on unitary groups over a norm-Euclidean imaginary quadratic field.

In the recent works \cite{bruinierRaum2024} and \cite{AIP} smaller $X$ are considered, which force new techniques to be used.  See Remark \ref{rmk:smallX?}.  We also mention the recent works \cite{flores} and \cite{fanCohDimSMV} which consider formal modular forms on the symplectic group from an algebro-geometric perspective.

With the exception of \cite{pollackAutConvSMF}, the proofs of the modularity of formal modular forms in the above-mentioned papers rely substantially on complex geometry or precise dimension formulas for holomorphic modular and Jacobi forms in some cases.  In our work \cite{pollackAutConvExc}, we developed an analogous notion of formal modularity for the non-holomorphic quaternionic modular forms on certain exceptional groups of type $F_4, E_{6}, E_{7}$, and $E_8$.  The proof of the modularity in \cite{pollackAutConvExc} and the companion note \cite{pollackAutConvSMF} rely on reduction theory and a quantitative Sturm bound, instead of geometry or complex analysis.

\subsection{Proof idea} We now sketch the proof idea of Theorem \ref{thm:autConvIntro} in case the formal modular form $(a_T)_{T \in J^\vee(\Q)}$ is cuspidal.  The general case follows from the cuspidal case by an application of the Koecher principle, see section \ref{sec:autConv}.

A key tool used is a quantitative Sturm bound for Jacobi forms.  The quantitative Sturm bound says that if the ``first" several Fourier coefficients of some cuspidal holomorphic Jacobi form $\eta$ are small, then all the Fourier coefficients of $\eta$ are correspondingly small.  

Suppose now one is given $(a_T)_{T \in J^\vee(\Q)} \in M_{\rho}^{f}(X,U)$, and assume that $a_T \neq 0$ implies $T$ is positive-definite so that we are in the cuspidal case.  For simplicity also assume that $\rho$ is one-dimensional so that we are considering scalar weight modular forms.  We sketch the proof that $|a_T(g_f)|$ grows polynomially in $T$ for every $g_f \in G(\A_f)$.  

Without loss of generality, by shrinking $X$ if necessary, we can assume $X \subseteq G(\A_f)$ is compact open.  By property \ref{item:propertyPsymms} of Definition \ref{def:formalMFs}--called the ``$P$-symmetries"--it suffices to bound $|a_T(x)|$ for the finitely many $x \in X/U$.  Assume for simplicity that $X = 1 \cdot U$.  Given $M > 0$ and $\delta > 1$, for some positive integer $k$ one has $\det(T) \leq M \cdot \delta^{k+1}$; we induct on $k$.  Assume one has an appropriate bound on the $|a_{S}(1)|$, for those $S$ with $\det(S) \leq M \cdot \delta^{k}$, and we'd like to bound $|a_{T}(1)|$ assuming $\det(T) \leq M \cdot \delta^{k+1}.$  By the $P$-symmetries, to bound $|a_T(1)|$, it suffices to bound $|a_{T \cdot \gamma}(1)|$ for some $\gamma$ in the congruence subgroup $\Gamma_{P,U}:= M_P(\Q) \cap U$.  

By reduction theory for $\Gamma_{P,U}$ acting on $J^\vee(\Q)$, up to some controllable finite discrepancy, one can choose $\gamma$ so that $T':=T \cdot \gamma$ satisfies $t':=(T',e)$ is relatively small compared to $\det(T)$.  Now using property \ref{item:propertyQsymms} of Definition \ref{def:formalMFs}--called the ``$Q$-symmetries"--for $t'$, one sees the coefficient $a_{T'}(1)$ in the Jacobi form $F_{t',1}(Z)$.  The ``first" Fourier coefficients of $F_{t',1}$ are now all of the form $a_{S}(1)$ for $\det(S) \leq M \cdot \delta^{k}$.  Using the inductive hypothesis--i.e., the previously-established bound on the $|a_{S}(1)|$ for $\det(S) \leq M \cdot \delta^{k}$--one can apply the quantitative Sturm bound to deduce the desired bound on $|a_T(1)|$.

\subsection{Outline of paper} We now outline the rest of the paper.  In section \ref{sec:holMFs} we describe the action of $G_J(\R)$ on $\H_J$ and also prove some simple facts about holomorphic modular forms on $G_J$. In section \ref{sec:reductionTheory} we review the classical reduction theory of Borel and Harish-Chandra and derive some consequences for later application.  In section \ref{sec:QSBs} we prove a quantitative Sturm bound for holomorphic cuspidal Jacobi forms.  In section \ref{sec:autConv} we prove the automatic convergence theorem.

\section{Tube domains and holomorphic modular forms}\label{sec:holMFs}
In this section we prove some results about the tube domain $\H_J$, the action of $G_J(\R)$ on it, and about the Fourier coefficients of holomorphic modular forms on $\H_J$.

\subsection{The Jordan algebra}
We begin with some notation for the Jordan algebra.  

Recall that $r$ denotes the rank of $J$.  There is a homogeneous degree $r$ polynomial $\det \in Sym^{r}(J^\vee)$ called the determinant or Jordan norm.  It induces a homogeneous degree $r$ polynomial map 
\[
\det: J(R) \rightarrow R
\]
for any $\Q$-algebra $R$.  The determinant satisfies $\det(1_J) = 1$, and the action of $M_P$ on $J$ preserves the determinant up to scaling.  Likewise, there is $\det \in Sym^{r}(J)$, which induces
\[
\det: J^\vee(R) \rightarrow R
\]
for any $\Q$-algebra $R$.  We denote it by the same symbol.  For the $J$ of Table \ref{table:globalNotation}, the pairing on $J$ given by $(X,Y) = \frac{1}{2}\tr(XY+YX)$ induces an identification $J \rightarrow J^\vee$.  Via this identification, the determinant on $J^\vee$ is the same as that on $J$.

In cases 2 and 3 of Table \ref{table:globalNotation}, let $J_1 \subseteq J$ denote the subspace
\[
J_1 = \{X \in H_n(C): X_{1j} = 0=X_{1j} \text{ for } 1 \leq j \leq r\}.
\]
Thus $J_1$ can be identified with $H_{n-1}(C)$ in these cases.  In these cases, let $V_0 \subseteq J$ be the subspace
\[
V_0 = \{X \in H_n(C): X_{11} = 0 \text{ and } X_{ij} = 0 = X_{ji} \text{ if } i \geq 2\}.
\]
Thus $V_0 \simeq C^{n-1}$.  In case 1 of Table \ref{table:globalNotation}, let $J_1 = \Q b_{-2} = \Q e_{22}$.  In all cases, one has a direct sum decomposition
\begin{equation}\label{eqn:J_directSum3}
	J = \Q e_{11} \oplus V_0 \oplus J_1.
\end{equation}

The Jordan algebra $J(\R)$ has a notion of positive-definiteness, defined as follows.
\begin{definition} An element $Y \in J(\R)$ is said to be \emph{positive-definite} if there exists $h \in M_P(\R)$ with $Y = h \cdot 1_J$.  In this case, one writes $Y >0$.  If $Y_1, Y_2$ are in $J(\R)$ one writes $Y_1 > Y_2$ if $Y_1-Y_2 > 0$.   Likewise $T \in J^\vee(\R)$ is said to be positive-definite if there exists $h \in M_P(\R)$ so that $T = 1_J \cdot h$.
\end{definition}

We will use the following lemma below.
\begin{lemma}\label{lem:detIneq3part} Suppose $Y \in J(\R)$ is positive-definite, $Y = (\alpha, v, Y_1)$ in the notation of the decomposition \ref{eqn:J_directSum3}.  Then $\det(Y) \leq \det((\alpha,0,Y_1))$.
\end{lemma}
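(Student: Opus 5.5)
We prove the inequality by a Schur-complement (Peirce decomposition) computation. The idea is to write $\det(Y)$ as $\alpha$ times the determinant of a Schur complement of $Y_1$ by a positive semidefinite correction. This reduces the statement to monotonicity of $\det$ on the positive cone of $J_1$.

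First, positivity of $Y$ forces $\alpha>0$ and $Y_1>0$ in $J_1(\R)$. In cases 2 and 3, these are the diagonal entry and the lower-right principal block of a positive-definite Hermitian matrix. In case 1, write $Y=\alpha b_2+v+\beta b_{-2}$ with $v\in V_0(\R)$. Here $\det(Y)=\alpha\beta+q(v)$ with $q$ negative definite, and $Y$ lies in the positive cone, so $\alpha,\beta>0$.

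Case 1 is then immediate. Since $q(v)\le 0$,
\[\det(Y)=\alpha\beta+q(v)\le \alpha\beta=\det((\alpha,0,Y_1)).\]

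For cases 2 and 3, view $Y=\mm{\alpha}{v}{v^*}{Y_1}$ with $v\in C^{n-1}$ a row vector. We apply a unipotent element of $M_P$ that preserves $\det$ exactly: the Jordan analogue of $Y\mapsto gYg^*$ with $g=\mm{1}{0}{-\alpha^{-1}v^*}{1}$. In the octonion case this is realized by the quadratic representation $U_{x}$ attached to an element of the Peirce space, or equivalently by the standard Freudenthal identity
\[\det(Y)=\alpha\,\det{}_{J_1}\bigl(Y_1-\alpha^{-1}v^*v\bigr).\]
For $C=\O$, $n=3$, this can be checked directly from the explicit cubic norm formula
\[\det(Y)=abc-a\,n(x)-b\,n(y)-c\,n(z)+\tr(xyz),\]
where $a=\alpha$. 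In that case $J_1=H_2(\O)$ and $\det_{J_1}$ is the quadratic norm. Set $Y_1'=Y_1-\alpha^{-1}v^*v$. The same unipotent substitution shows $Y_1'>0$: it carries $Y$ to $(\alpha,0,Y_1')$, and the positive cone is stable under $M_P(\R)$. Meanwhile $\det((\alpha,0,Y_1))=\alpha\det(Y_1)$.

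It therefore remains to show $\det(Y_1')\le \det(Y_1)$ whenever $0<Y_1'\le Y_1$, where $Y_1-Y_1'=\alpha^{-1}v^*v\ge 0$ is a rank-one semidefinite element. We use transitivity of $M_P(\R)$ on the positive cone of $J_1$, which holds since $J_1$ is again Euclidean. Move $Y_1'$ to $1_{J_1}$; $\det$ scales by the same positive factor on both sides, and order is preserved. Then $Y_1$ becomes $1+S$ with $S\ge 0$. Diagonalizing $S$ by the spectral theorem in a Euclidean Jordan algebra, with the automorphism group fixing $1$, gives $\det(1+S)=\prod(1+\lambda_i)\ge 1$. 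Alternatively, since $S$ has rank one here, $\det(1+S)=1+\tr(S)\ge 1$ directly.

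The main obstacle is justifying the Schur-complement identity and the positivity of $Y_1'$ uniformly, including the nonassociative case $H_3(\O)$. We expect to handle this either by explicit expansion of the cubic norm there, or by invoking the quadratic representation $U_{1+x}$ of a Peirce-$\tfrac12$ element $x$, which lies in $M_P$ and preserves $\det$.
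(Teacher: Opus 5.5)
The paper states Lemma~\ref{lem:detIneq3part} without proof, so there is nothing to compare against. Your argument (a Schur complement, then monotonicity of $\det$ on the cone of $J_1$) is the natural one and is correct in outline. Case~1 is fine as written. For $H_3(\O)$, your expansion of the cubic norm does yield $\det(Y)=\alpha\det_{J_1}(Y_1-\alpha^{-1}v^*v)$, using $n(\bar z\bar y)=n(y)n(z)$ and $t(x(yz))=t((xy)z)$.

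The one step that fails as proposed is the octonionic justification of $Y_1'>0$ via $U_{1+x}$. For $x$ in the Peirce-$\tfrac12$ space, $U_{1+x}$ is not unipotent. It also does not preserve $\det$: $\det(U_a y)=\det(a)^2\det(y)$, and $\det(1+x)=1-n(s_1)-n(s_2)$ when $x$ has first row $(0,s_1,s_2)$. Moreover, it changes the $(1,1)$ entry rather than clearing the off-diagonal part.

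The right uniform tool is the Frobenius transformation $\tau_e(z)=\exp(2\,z\,\square\,e)$ for $z\in V_0(\R)$. In the associative cases, writing $z=\mm{0}{u^*}{u}{0}$, it is exactly $X\mapsto gXg^*$ with $g=\mm{1}{0}{u}{1}$. In general it is a unipotent element of the identity component of the structure group; it is the transformation appearing in the proof of Theorem~\ref{thm:QSBtake1}, with $x_1=-x_0/m$. It gives the determinant factorization and $Y_1'>0$ at the same time.

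Alternatively, you do not need $Y_1'>0$ at all. Positivity of $Y$ gives $Y_1>0$, and $R=\alpha^{-1}v^*v\geq 0$ has rank at most one. Move $Y_1$ to $1_{J_1}$ by a cone automorphism $h$ of $J_1$; this group, not $M_P(\R)$, is the one whose transitivity you are using. Then $hR=\mu c$ with $\mu\geq 0$ and $c$ a primitive idempotent, so $\det(Y_1-R)=(1-\mu)\det(Y_1)\leq\det(Y_1)$.
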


\subsection{The action on the tube domain}
Let $\g_J$ denote the rational Lie algebra of $G_J$, $\g_0 = \g_J \otimes_\Q \R$ and $\g = \g_0 \otimes_{\R} \C$.   Let $\theta$ be the Cartan involution on $\g_0$ and $B: \g_J \times \g_J \rightarrow \Q$ a $G$-invariant symmetric bilinear form so that 
\[
B_{\theta}(X,Y):= - B(X,\theta(Y))
\]
is symmetric and positive-definite on $\g_0$.  Let $K \subseteq G(\R)$ be the subgroup of $G(\R)$ preserving the bilinear form $B_{\theta}$.  Then $K$ is a maximal compact subgroup of $G(\R)$.

Let $P^{op} = N_{P^{op}} M_P$ denote the parabolic subgroup of $G$ opposite to $P$, and let $\n_{P^{op}}$ denote the Lie algebra of $N_{P^{op}}$.  Likewise let $\n_{P}$ denote the Lie algebra of $N_P$ and let $\m_{P}$ denote the Lie algebra of $M_P$. One has a direct sum decomposition
\[
\g_J = \n_{P^{op}} \oplus \m_P \oplus \n_{P}.
\]

Recall the $M_P$-equivariant linear map $n_L: J \rightarrow \n_{P}$.  One also has an $M_P$-equivariant linear map $n_{L}^{op}: J^\vee \rightarrow \n_{P^{op}}$. Set $\g = \g_J \otimes \C$.  There is a polynomial map
\[
r: J \rightarrow J \otimes \g_J
\]
given by
\[
Z \mapsto \sum_{\alpha} X_{\alpha} \otimes (Ad(n(Z)) \cdot n_{L}^{op}(X_{\alpha}^\vee)).
\]
Here $X_{\alpha}$ is a basis of $J$ and $X_{\alpha}^\vee$ is the dual basis of $J^\vee$. The expression $Ad(n(Z))$ denotes the adjoint action of $G_J$ on $\g_J$.  Extending scalars to $\C$ gives a map
\[
r: J(\C) \rightarrow J(\C) \otimes_{\C} \g
\]
by the same formula.  We let $G(\R) \subseteq G(\C)$ act on $J(\C) \otimes_{\C} \g$ via its adjoint action on the second factor.  If $m \in M_P(\C)$, then we write $L(m)$ for the action of $m$ on the first factor $J(\C)$.

Recall that
\[
\H_J = \{X + iY: X,Y \in J(\R), Y>0\}.
\]
The action of $G(\R)$ on $\H_J$ and the factor of automorphy $J: G(\R) \times \H_J \rightarrow M_P(\C)$ can be defined simultaneously as a result of the following lemma.

\begin{lemma}\label{lem:almostAction} Suppose $g \in G(\R)$, $Z \in \H_J$ and $k \in K$.
\begin{enumerate}
    \item One has $Ad(k) r(i 1_J) = L(m_k)^{-1} r(i 1_J)$ for some $m_k \in M_P(\C)$.
    \item In general, there exists a unique $m' \in M_P(\C)$ and $Z' \in \H_J$ so that
\[
Ad(g) \cdot r(Z) = L(m')^{-1} r(Z').
\]
\end{enumerate}
\end{lemma}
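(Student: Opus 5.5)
The idea is to read $r(Z)$ as a parametrization of the subspace $\mathfrak{p}_Z := Ad(n(Z))\,\n_{P^{op}}\otimes\C \subseteq \g$, i.e. the $n(Z)$-translate of the opposite nilradical, together with a chosen basis. Concretely, the $J(\C)$-factor of $r(Z)$ records how the basis $Ad(n(Z))n_L^{op}(X_\alpha^\vee)$ is indexed. Since $n_L^{op}$ is $M_P$-equivariant and $\sum_\alpha X_\alpha\otimes X_\alpha^\vee$ is $M_P$-invariant, we get
\[
L(m)^{-1} r(Z) = \sum_\alpha X_\alpha\otimes Ad(n(Z))Ad(m)\, n_L^{op}(X_\alpha^\vee)
\]
up to a harmless normalization of the inverse. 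So the set $\{L(m)^{-1}r(Z): m\in M_P(\C)\}$ is the set of all ``frames'' of the subspace $\mathfrak p_Z$ obtained by moving the standard frame by the Levi. Both statements then become statements about the $G(\R)$-orbit of the subspace $\mathfrak p_{i1_J}$ in the Grassmannian of $\g$, which is the usual Harish-Chandra embedding picture.

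For part (1), I would first show that $\mathfrak p_{i1_J} = Ad(n(i1_J))\n_{P^{op}}$ equals $\mathfrak p^-$, the antiholomorphic part of the Cartan decomposition $\g=\mathfrak k\oplus\mathfrak p^+\oplus\mathfrak p^-$. This can be done via the Cayley transform. The element $n(i1_J)$, suitably combined with $n^{op}$, conjugates the $\mathfrak{sl}_2$-triple attached to $1_J$ so that $\n_{P^{op}}$ goes to an eigenspace of the central element of $\mathfrak k$. I would verify this by a direct computation in the $\mathfrak{sl}_2$ generated by $n_L(1_J), n_L^{op}(1_J^\vee)$, plus the fact that $\g$ decomposes under this $\mathfrak{sl}_2$ with weights $-2,0,2$ only, which is the KKT grading. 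Since $K$ normalizes $\mathfrak p^-$, $Ad(k)$ maps $\mathfrak p_{i1_J}$ to itself. Two frames of the same subspace differ by a linear map, and I must show this map comes from $L(m_k)$ with $m_k\in M_P(\C)$. For this I would use that $Ad(k)$ acts on $\mathfrak p^-$ through $K_\C$, and that $Ad(n(i1_J))^{-1}K_\C Ad(n(i1_J))$ lands in $M_P(\C)$: the Cayley transform carries $K_\C$ to the Levi $M_P(\C)$, again by the $\mathfrak{sl}_2$ computation.

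For part (2), uniqueness is easy. If $L(m_1)^{-1}r(Z_1)=L(m_2)^{-1}r(Z_2)$, then $\mathfrak p_{Z_1}=\mathfrak p_{Z_2}$. The stabilizer of $\n_{P^{op}}\otimes\C$ is $P^{op}(\C)$, and $n(Z_2-Z_1)\in P^{op}$ forces $Z_1=Z_2$, since $N_P\cap P^{op}=1$. Then $m_1=m_2$ because $L$ is faithful on frames. For existence, write $g$ using $G(\R)=N_P(\R)M_P(\R)K$ and $Z=n(X)h\cdot i1_J$, so that $r(Z)$ is $Ad(n(X)h)r(i1_J)$ up to an $L$-factor, using the equivariance above. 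Then $Ad(g)r(Z)=Ad(g')r(i1_J)$ with $g'=gn(X)h$. Decompose $g'=n(X')h'k$ (Iwasawa/Levi decomposition) and apply part (1) to the $k$. Finally, using equivariance for $n(X')h'$ gives the form $L(m')^{-1}r(Z')$ with $Z'=X'+ih'\cdot 1_J\in\H_J$.

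The main obstacle will be part (1), specifically identifying the Cayley-conjugate of $K_\C$ with a subgroup of $M_P(\C)$ uniformly in all three cases of Table \ref{table:globalNotation}. I would try to reduce this to Lie algebras, showing that $Ad(n(i1_J))^{-1}\mathfrak k_\C\subseteq \m_P\oplus$ (something acting trivially on $\n_{P^{op}}$-frames). This uses only the $\mathfrak{sl}_2$-weight structure from KKT and the fact that $\theta$ swaps $\n_P$ and $\n_{P^{op}}$. I would then pass to groups, using that $K$ is connected, or handle the components via the connectedness assumption on $G(\R)$.
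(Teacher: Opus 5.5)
Your proposal is correct in substance and close to the paper's argument, but organized differently.

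\textbf{How the routes differ.} The paper obtains $Z'\in J(\C)$ and $m'$ in part (2) from the Harish-Chandra embedding. That is, it uses a factorization $g\,n(Z)=n(Z')m'v'$ in $N_P(\C)M_P(\C)N_{P^{op}}(\C)$, together with the remark that $v'$ acts trivially on $\n_{P^{op}}$. It then uses the Iwasawa decomposition $gp=p'k'$ (and, implicitly, part (1) for $k'$) only to see that $\mathrm{Im}(Z')>0$. Part (1) it treats as ``essentially the Cayley transform'' without further detail.

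Your existence argument is the paper's positivity argument promoted to a full proof. From part (1), $G(\R)=N_P(\R)M_P(\R)K$ and $M_P$-equivariance you get the decomposition and the positivity at once. So the Harish-Chandra embedding is not needed; you in effect reprove the case of it that is used.

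The price is that you must actually prove part (1). Your $\sl_2$ argument does this. Since the grading element has only weights $-2,0,2$, each $X\in\n_{P^{op}}$ is a lowest weight vector of a $3$-dimensional piece. The $\mathrm{SL}_2$ computation then gives $Ad(n(i1_J))\,\n_{P^{op}}\otimes\C\subseteq\mathfrak p^-$, with equality by dimension count. This requires the standard compatibility of $\theta$ with the $\sl_2$-triple attached to $1_J$, which the paper's normalization of $K$ presupposes. It also requires $K$ connected, which follows from $G(\R)$ connected. Your uniqueness argument for $Z'$ via the stabilizer of $\n_{P^{op}}$ is fine; the paper does not address uniqueness at all.

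\textbf{Two points need correcting.}

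First, $n(i1_J)^{-1}K_\C\,n(i1_J)$ does not land in $M_P(\C)$. Already in $\mathrm{SL}_2$, with $k_\theta=\mm{\cos\theta}{\sin\theta}{-\sin\theta}{\cos\theta}$, one gets $n(-i)\,k_\theta\,n(i)=\mm{e^{i\theta}}{0}{-\sin\theta}{e^{-i\theta}}$. The conjugate lands in $P^{op}(\C)=M_P(\C)N_{P^{op}}(\C)$. Its $N_{P^{op}}$-component acts trivially on $\n_{P^{op}}$ because that algebra is abelian; this is the ``something'' in your last paragraph, and the same remark the paper makes about $v'$. The cleanest route is at the group level. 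Once $Ad(k)$ preserves $Ad(n(i1_J))\n_{P^{op}}\otimes\C=\mathfrak p^-$, the element $n(i1_J)^{-1}k\,n(i1_J)$ normalizes $\n_{P^{op}}\otimes\C$, hence lies in $P^{op}(\C)$. Take $m_k$ to be its Levi component; no passage from Lie algebras to groups is needed.

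Second, ``$L$ is faithful on frames'' is false. The center of $G(\C)$ lies in $M_P(\C)$ and acts trivially on $J$; for example $-1\in\Sp_{2n}$. So $L(m')^{-1}r(Z')$ determines $Z'$, but determines $m'$ only up to the kernel of $M_P(\C)\to\GL(J)$. This is an imprecision in the statement itself rather than in your strategy. To get a well-defined $J(g,Z)$, take $m'$ to be the $M_P(\C)$-component of the unique factorization $g\,n(Z)=n(Z')m'v'$. Your construction produces exactly this canonically if $m_k$ is defined as above.
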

\begin{proof} The lemma is essentially the Harish-Chandra embedding 
\begin{equation*}\label{eqn:HCE}
G(\R) \subseteq n(J(\C))P^{op}(\C)
\end{equation*}
phrased differently.  See, for example, \cite[section 3]{wolfHermSym}.

The first part is essentially the Cayley transform.  For the second, we have $g n(Z) = n(Z') m' v'$ inside of $G(\C)$ for some $Z' \in J(\C)$, $m' \in M_P(\C)$  and $v' \in N_{P^{op}}(\C)$.  The adjoint action of $v'$ on the $n_{L}^{op}(X_\alpha^{\vee})$ is trivial, so one obtains $Ad(g) \cdot r(Z) = L(m')^{-1} r(Z')$.  

To see that $Im(Z') > 0$, one can argue as follows.  There is $p \in P(\R)$ so that $Ad(p) r(i 1_J) = L(m)^{-1} r(Z)$.  Thus $L(m)^{-1} Ad(g) r(Z) = Ad(gp) r(i 1_J)$.  By the Iwasawa decomposition, $gp = p' k'$ for some $p' \in P(\R)^{0}$ and $k' \in K$. If $p' = n(x')m'$ with $x' \in N_P(\R)$ and $m' \in M_P(\R)^{0}$, then one sees $Z' = x'+ i (m' \cdot 1_J)$, so that $Y' = m' \cdot 1_J$ is positive-definite.
\end{proof}

\begin{definition} Suppose $g \in G(\R)^{0}$ and $Z\in \H_J$. Applying Lemma \ref{lem:almostAction}, $Ad(g) r(Z) = L(m')^{-1} r(Z')$ for a unique $m' \in M_P(\C)$ and $Z' \in \H_J$.  One sets $gZ = Z'$ in $\H_J$ and $J(g,Z) = m'$.  This defines an action
\[
G(\R) \times \H_J \rightarrow \H_J
\]
and the factor of automorphy 
\[
J: G(\R) \times \H_J \rightarrow M_P(\C).
\]
The factor of automorphy and the action satisfy
\[
J(g_1 g_2, Z) = J(g_1, g_2 Z) J(g_2, Z)
\]
for every $Z \in \H_J$, and $g_1, g_2 \in G(\R)$.
\end{definition}
Note that the map $k \mapsto J(k,i1_J)$ from  $K$ to $M_P(\C)$ is a group homomorphism.  Also, if $m \in M_P(\R)$ then 
\[J(m, Z) = m \in M_P(\R) \subseteq M_P(\C)\]
independent of $Z \in \H_J$.

Set $K_M = M_P(\R) \cap K$.  One has
\[K_M = \{k \in M_P(\R): 1_J \cdot k = 1_J\}.\]

\subsection{Fourier coefficients of holomorphic modular forms}
We consider the Fourier coefficients of holomorphic automorphic forms on $G$.  

Suppose $\rho: M_P(\C) \rightarrow \GL(V_{\rho})$ is a finite-dimensional complex representation.  Fix once and for all a $J(K,i 1_J)$-invariant norm $|| \cdot ||$ on $V_\rho$.

\begin{definition}\label{defn:betaTuT} Suppose $T \in J(\Q)^\vee$ is positive-definite.  Let $u_T \in M_P(\R)$ be such that $1_J \cdot u_T = T$; $u_T$ is unique up to left-multiplication by an element of $K_M$.  Suppose $\varphi \in M_{\rho}(U)$ is a holomorphic automorphic form of weight $\rho$ and let $a_T: G(\A_f) \rightarrow V_{\rho}$ denote its Fourier coefficients.  We set 
	\[\beta_{T}(g_f) = || \rho(J(u_T,i 1_{J})) a_{T}(g_f)||.\]
Note that $\beta_T$ is independent of the choice of $u_T$, and satisfies $\beta_{T}(m_f g) = \beta_{T \cdot m}(g)$ for all $m \in M_P(\Q)$ and $g \in G(\A_f)$.
\end{definition}

The following lemma bounds the size of the $\beta_T$ for $\varphi$ a cuspidal holomorphic automorhpic form $\varphi$ on $G$ or a cuspidal holomorphic Jacobi form.  For such a $\varphi$, we set $||\varphi||$ the supremum of $||\varphi(g)||$ over $g \in G(\A)$.  If $Y \subseteq G(\A_f)$ is a subset with $N_P(\A_f) Y = Y$, we set $||\varphi||_Y$ the supremum of $||\varphi(g)||$ over $g \in Y \times G(\R)$.
\begin{lemma} Suppose $Y \subseteq G(\A_f)$ satisfies $N_P(\A_f) Y = Y$ and suppose $\varphi: Y \times G(\R) \rightarrow \C$ is continuous and left-invariant by $N_P(\Q)$.  Assume moreover that $g\mapsto J(g,i1_J) \varphi(yg)$ descends to a holomorphic function on $\H_J$ for every $y \in Y$. Then $\beta_{T}(y) \leq e^{2\pi r} ||\varphi||_{Y}$ for all $y \in Y$.
\end{lemma}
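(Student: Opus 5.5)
The plan is the classical Hecke bound: extract $a_T(y)$ as a Fourier integral over $N_P(\Q)\backslash N_P(\A)$, evaluated at the point $Z = i\,(u_T^{-1}\cdot 1_J)$, where the exponential $|q^T|$ equals $e^{-2\pi r}$. Fix $y \in Y$. Since $\varphi$ is left $N_P(\Q)$-invariant and $N_P(\A_f)Y = Y$, the function $x \mapsto \varphi(n(x)yg)$ on $J(\Q)\backslash J(\A)$ makes sense. Its $T$-th Fourier coefficient, after restricting to the real component, is $\rho(J(g,i1_J))^{-1} a_T(y)q^T$. Here $g\cdot i1_J = Z$, as in the expansion $F_{\varphi,y}(Z) = \sum_T a_T(y) q^T$ that holds for the holomorphic $V_\rho$-valued function. (I will write $\rho(J(g,i1_J))\varphi$ even though the statement omits $\rho$, and I read $\varphi$ as $V_\rho$-valued.) The finite-adelic part of the integral is absorbed by $N_P(\A_f)$-invariance of $Y$ and only adds a character, so taking norms gives
\[
\|\rho(J(g,i1_J))^{-1} a_T(y)\|\, e^{-2\pi (T,\mathrm{Im} Z)} \le \sup_{x}\|\varphi(n(x) y g)\| \le \|\varphi\|_Y ,
\]
using that the quotient has volume one.

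Next I choose $g$. Take $g = m \in M_P(\R)$ with $m \cdot 1_J = Y_0$, where $Y_0$ is the element dual to $T$ under the identification of $J$ with $J^\vee$, normalized so that $(T, Y_0)$ is as small as possible. The natural choice is $m = u_T^{-1}$, interpreted through the contragredient action. Then $J(m,i1_J) = m$, and $(T, m\cdot 1_J) = (1_J\cdot u_T, u_T^{-1}\cdot 1_J) = (1_J,1_J) = \tr(1_J) = r$. So $q^T$ has absolute value $e^{-2\pi r}$, and the left side becomes $\|\rho(u_T)a_T(y)\|e^{-2\pi r}$. This should equal $\beta_T(y)e^{-2\pi r}$, since $J(u_T,i1_J) = u_T$.

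The main obstacle is the bookkeeping of the actions. I must check that the right action of $M_P$ on $J^\vee$ and the left action on $J$ are compatible, so that $(T\cdot h, Y) = (T, h\cdot Y)$; that $\rho(J(m,Z)) = \rho(m)$ is consistent with $\rho(J(u_T,\cdot))$ in the definition of $\beta_T$, possibly up to an inverse depending on conventions; and that $K_M$-invariance of the norm makes the choice of $u_T$ irrelevant. Once these conventions are pinned down, the estimate gives $\beta_T(y) \le e^{2\pi r}\|\varphi\|_Y$.
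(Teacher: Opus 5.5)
Your proposal is correct and is essentially the paper's own argument: bound the $T$-th Fourier coefficient of $x\mapsto\varphi(n(x)yg)$ by $\|\varphi\|_Y$, then specialize to $g=u_T^{-1}$, where $\rho(J(u_T^{-1},i1_J))^{-1}=\rho(J(u_T,i1_J))$ and $|q^T|=e^{-2\pi(1_J,1_J)}=e^{-2\pi r}$. The only slip is cosmetic: $u_T^{-1}\cdot 1_J$ is the Jordan inverse of $T$, not the element of $J$ corresponding to $T$ under the trace form, but your computation only uses $m=u_T^{-1}$; and your integral over $N_P(\Q)\backslash N_P(\A)$ is, if anything, more careful than the paper's $N_P(\Q)\backslash N_P(\R)$.
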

\begin{proof} Recall that $\psi: \Q\backslash \A \rightarrow \C^\times$ denotes our fixed additive character.  For $T \in J^\vee(\Q)$ and $h \in Y \times G(\R)$, let 
\[
\varphi_T(h) = \int_{N_P(\Q)\backslash N_P(\R)}\psi^{-1}((T,x))\varphi(n(x)h)\,dx.
\]
We have 
\[\rho(J(g,i1_J)) \varphi_T(yg) = a_T(y) e^{2\pi i (T,g \cdot i 1_J)}.\]

For any $g \in G(\R)$ and $y \in Y$, one has 
\begin{align*}
||\varphi||_Y &\geq ||\varphi_T(y g)|| \\
&= || e^{2\pi i (T,g \cdot (i 1_J))} \rho(J(g,i))^{-1} a_T(y)|| \\
&=  || e^{2\pi i (T,g \cdot (i 1_J))} \rho(J(u_Tg,i))^{-1} \rho(J(u_T,i 1_J)) a_T(y)||.
\end{align*}
Now take $g = u_T^{-1}$.
\end{proof}

\section{Reduction theory}\label{sec:reductionTheory}
In this section we recall the reduction theory of Borel and Harish-Chandra and derive some specific consequences of it for our later application.

\subsection{Basic reduction theory}
Suppose $H$ is a reductive algebraic group over $\Q$ and $R\subseteq H$ is a parabolic subgroup.  Let $\Phi_R$ be the set of roots of $H$ contained in the unipotent radical of $R$.  For $\epsilon > 0$, define
\[
R_{\epsilon} = \{r \in R(\R): |\alpha(r)| \geq \epsilon \text{ for all } \alpha \in \Phi_R\}.
\]
Let $K_H \subseteq H(\R)$ be a maximal compact subgroup with $H(\R) = R(\R)K_H$.  If $M_R$ is a Levi factor of $R$ compatible with the Cartan involution defining $K_H$, let 
\[
M_{R,\epsilon} = \{r \in M_R(\R): |\alpha(r)| \geq \epsilon \text{ for all } \alpha \in \Phi_R\}.
\]
Set $S_\epsilon = R_\epsilon K_H$.

\begin{theorem}[Borel, Harish-Chandra]\cite{borelHarishC}\label{thm:BHCred} Suppose $H$ is a reductive algebraic group over $\Q$ and $U_H \subseteq H(\A_f)$ is a compact open subgroup.  Set $\Gamma_U = H(\Q) \cap U$, thought of as a subgroup of $H(\R)$.  Let $R \subseteq H$ be a parabolic subgroup and $K_{H} \subseteq H(\R)$ a maximal compact subgroup with $H(\R) = R(\R) K_H$.  Then, there exists a finite set $F \subseteq H(\Q)$ and a sufficiently small $\epsilon>0$ so that $H(\R) = \Gamma_U F S_{\epsilon}$.  In fact, there exists a compact subset $U_R^C$ of the unitpotent radical $U_R(\R)$ of $R(\R)$ so that $H(\R) = \Gamma_U F U_R^C M_{R,\epsilon} K_H$.
\end{theorem}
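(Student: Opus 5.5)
The plan is to derive the statement from the classical Borel--Harish-Chandra theorem for minimal parabolics, together with finiteness of class numbers and a Bruhat/Iwasawa decomposition argument.

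First, reduce to the case where $R$ is a minimal $\Q$-parabolic $R_0 \subseteq R$. Take $K_H$ so that the standard Levi factors are $\theta$-stable. Borel's reduction theory (cf. \cite{borelHarishC}) then gives a finite set $F_0 \subseteq H(\Q)$ and a Siegel set $\mathfrak{S} = \omega A_{t} K_H$ with $H(\R) = \Gamma_U F_0 \mathfrak{S}$. Here $\omega$ is a compact subset of $U_{R_0}(\R)M_0(\R)^{1}$, with $M_0$ the anisotropic-kernel Levi factor, and $A_t = \{a \in A(\R)^0 : \alpha(a) \geq t \text{ for simple } \alpha\}$. The finite set $F_0$ arises from the finiteness of $\Gamma_U \backslash H(\Q)/R_0(\Q)$ and from finiteness of class numbers of $H(\A_f)$ relative to $U$.

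Second, check that a Siegel set for $R_0$ lies in a set $R_\epsilon K_H$ for the given $R$. Positive roots of $A$ in $U_R$ are nonnegative combinations of simple roots, and each involves at least one simple root outside the Levi of $R$. Hence on $A_t$ each $\alpha \in \Phi_R$ satisfies $|\alpha(a)| \geq \min(t,1)^{N}$ for some fixed $N$. Write elements of $\omega A_t$ as $u m a$ with $u$ in a compact set and $m$ bounded. Conjugating the $U_R$-component of $u$ past $ma$ keeps it in $U_R(\R)$. The factor $ma$ lies in $R(\R)$, and $|\alpha(ma)|$ differs from $|\alpha(a)|$ by a bounded amount, since $m$ ranges over a compact set on which the characters are bounded. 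This gives $\mathfrak{S} \subseteq S_\epsilon$ for small $\epsilon$, and so $H(\R) = \Gamma_U F S_\epsilon$.

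Third, for the refined statement, write each element as $u m k$ with $u \in U_R(\R)$ and $m \in M_{R,\epsilon}$. We need the $U_R$ part to lie in a compact set. For this, use instead the decomposition of the $R_0$-Siegel set $\omega A_t K_H$ with $\omega = \omega_{U}\omega_M$, where $\omega_U \subseteq U_{R_0}(\R)$ is compact and $\omega_M \subseteq M_0(\R)$ is compact. Since $U_{R_0} = U_R \cdot (U_{R_0} \cap M_R)$, we can write $\omega_U \subseteq U_R^C \cdot \omega'$ with both factors compact, using that multiplication $U_R \times (U_{R_0}\cap M_R) \to U_{R_0}$ is a homeomorphism. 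Then $\omega' \omega_M A_t \subseteq M_{R,\epsilon}$, because the characters in $\Phi_R$ are trivial on the unipotent part, bounded on the compact part, and bounded below on $A_t$.

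The main obstacle is the careful justification that the root conditions hold uniformly, including the absolute values on the $M_R$ part. Note that $\alpha$ is a character of a torus, so it must be interpreted as $|\det \mathrm{Ad}|$ on root spaces, or as $|\alpha|$ evaluated on the $A$-component. I would handle this by defining $|\alpha(r)|$ through the projection $R \to M_R \to M_R/[M_R,M_R]$ and checking continuity there.
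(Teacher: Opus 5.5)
Your route is genuinely different from the paper's, because the paper gives no argument at all: it quotes Borel--Harish-Chandra. You deduce the statement for arbitrary $R$ from the Siegel-set theorem for a minimal $\Q$-parabolic $R_0\subseteq R$. You split $U_{R_0}=U_R\cdot(U_{R_0}\cap M_R)$ and use that every $\alpha\in\Phi_R$ is a nonnegative combination of simple roots, hence bounded below on $A_t$. This is the standard deduction and is sound in outline. It also makes explicit how $S_\epsilon$ and $M_{R,\epsilon}$ relate to honest Siegel sets, which the paper leaves implicit. A minor point: $K_H$ is given, so you cannot choose it. Instead take $A$ and the Levi factors inside $R_0(\R)\cap\theta R_0(\R)\subseteq R(\R)\cap\theta R(\R)$; these are $U_{R_0}(\R)$-conjugates of $\Q$-Levi factors, which is all Siegel sets need.

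The step that needs repair is your proposed meaning of $|\alpha(r)|$. Let $\Delta$ be the simple roots and $\Delta_R\subseteq\Delta$ those of $M_R$. If you define $|\alpha(r)|$ through $R\to M_R\to M_R/[M_R,M_R]$, then on $a\in A$ you are not evaluating $\alpha$. You are evaluating its projection $\alpha_R$: the functional that agrees with $\alpha$ on $\mathfrak a_R$ and kills the coroots of $M_R$. This differs from $\alpha$ by a combination of elements of $\Delta_R$, so your Step 2 bound on $\alpha(a)$ does not apply as written.

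It can be rescued. For $\beta\in\Delta\setminus\Delta_R$ one has $\beta_R=\beta+\sum_{\gamma\in\Delta_R}d_\gamma\gamma$ with all $d_\gamma\ge 0$, because the inverse Cartan matrix of $\Delta_R$ has nonnegative entries and $\langle\beta,\gamma^\vee\rangle\le 0$. Hence $\alpha_R=\sum_{\beta\notin\Delta_R}n_\beta\beta_R$ is still a nonnegative combination of simple roots and is bounded below on $A_t$.

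More seriously, with this definition $M_{R,\epsilon}\supseteq[M_R,M_R](\R)$. The refined decomposition then constrains $m\in M_R(\R)$ only through its $A_R$-component. That is much weaker than what the paper uses later. In the proof of Theorem \ref{thm:QSBtake1} one needs $m_1\cdot 1_{r-1}>\epsilon_J 1_{r-1}$ for $m_1\in S_Q(\epsilon_J)$. This fails for suitable $m_1\in\mathrm{SL}_{n-1}(\R)$ in the Levi of $S_Q$ when $G=\Sp_{2n}$, $n\ge 3$.

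The better fix is not to project. Take $M_{R,\epsilon}=\{m\in M_R(\R):\|\mathrm{Ad}(m)^{-1}|_{\mathfrak u_R}\|\le\epsilon^{-1}\}$ for a $K_H$-invariant norm, or simply keep the set $\omega'\omega_M A_t$. For $m=u'm_0a$ with $u'm_0$ in a compact set, this norm is at most $C\max_{\alpha\in\Phi_R}\alpha(a)^{-1}$. So your Steps 2--3, run with the actual roots, prove exactly this stronger form.
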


One can make Theorem \ref{thm:BHCred} adelic.  This is well-known.  We write down the details for the convenience of the reader.
\begin{corollary}\label{cor:reductionAdelic} Suppose $H$ is a reductive algebraic group over $\Q$.  Let $R \subseteq H$ be a parabolic subgroup and $K_{H} \subseteq H(\R)$ a maximal compact subgroup with $H(\R) = R(\R) K_H$.  Then there is a compact open subset $X$ of $H(\A_f)$ and a sufficiently small $\epsilon > 0$ so that $H(\A) = H(\Q) S_{\epsilon} X$.
\end{corollary}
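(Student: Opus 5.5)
The plan is to combine the finiteness of class numbers with Theorem \ref{thm:BHCred}. First I use the standard finiteness result of Borel: for the reductive group $H$ over $\Q$ and a compact open subgroup $U_H \subseteq H(\A_f)$, the double coset space $H(\Q)\backslash H(\A_f)/U_H$ is finite. Then I choose representatives $h_1,\dots,h_s \in H(\A_f)$, so that
\[
H(\A_f) = \bigsqcup_{j=1}^{s} H(\Q) h_j U_H .
\]

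Now let $g = (g_\infty, g_f) \in H(\A)$. Write $g_f = \gamma h_j u$ with $\gamma \in H(\Q)$ and $u \in U_H$. Replacing $g$ by $\gamma^{-1} g$, where $\gamma$ is embedded diagonally, I may assume $g_f = h_j u$, at the cost of changing $g_\infty$ to $\gamma^{-1}g_\infty$.

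Next, for each $j$ I set $U_j = h_j U_H h_j^{-1}$ and $\Gamma_j = H(\Q) \cap U_j$. This is an arithmetic subgroup. I apply Theorem \ref{thm:BHCred} to $U_j$, obtaining a finite set $F_j \subseteq H(\Q)$ and $\epsilon_j > 0$ with $H(\R) = \Gamma_j F_j S_{\epsilon_j}$. Write $g_\infty = \delta f s$ with $\delta \in \Gamma_j$, $f \in F_j$ and $s \in S_{\epsilon_j}$. Multiplying $g$ on the left by $(f^{-1}\delta^{-1})$, embedded diagonally in $H(\Q)$, the archimedean component becomes $s$. The finite component becomes
\[
f^{-1}\delta^{-1} h_j u = f^{-1} h_j (h_j^{-1}\delta^{-1} h_j) u ,
\]
and since $\delta \in U_j$, this lies in $f^{-1} h_j U_H$.

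Finally, I set $\epsilon = \min_j \epsilon_j$, so that $S_{\epsilon_j} \subseteq S_\epsilon$ since the defining inequalities only weaken. I also set
\[
X = \bigcup_{j} \bigcup_{f \in F_j} f^{-1} h_j U_H ,
\]
a finite union of compact open sets, hence compact open. The steps above show $H(\A) = H(\Q) S_\epsilon X$. The only point requiring care is that the rational element used to move the archimedean component into the Siegel set must act compatibly at the finite places. This is handled by taking $\delta$ in $U_j$, so that $\delta$ is absorbed into the $U_H$-coset. The finite set $F_j$ is absorbed into $X$. I do not expect any genuine obstacle beyond citing the finiteness of class numbers.
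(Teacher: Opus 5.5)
Your proof is correct and follows essentially the same route as the paper: a class-number decomposition of $H(\A_f)$, the Borel--Harish-Chandra theorem for an arithmetic group whose finite-adelic image stabilizes the relevant cosets on the left, and absorbing the finite set $F$ into $X$. The only difference is cosmetic. The paper takes a single compact open $U$ with $UX_0 = X_0$ and applies the theorem once. You apply it separately to each conjugate $h_jU_Hh_j^{-1}$ and then take $\epsilon = \min_j \epsilon_j$.
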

\begin{proof} Let $X_0 \subseteq H(\A_f)$ be compact open such that $H(\A_f) = H(\Q) X_0$.  Let $U \subseteq H(\A_f)$ be a compact open subset satisfying $U X_0 = X_0$.  Let $\Gamma_U, F, \epsilon$ be as in Theorem \ref{thm:BHCred}.  Set $X = F^{-1} X_0$.

If $h_f \in H(\A_f)$, then $h_f = \mu_f x_0$ for some $\mu \in H(\Q)$ and $x_0 \in X_0$.  Now suppose $h \in H(\R)$.  Then $\mu_\infty^{-1} h = \gamma_\infty t_\infty s$ for some $\gamma_\infty \in \Gamma_U$, $t_\infty \in F$ and $s \in S_{\epsilon}$.    Now
\begin{align*}
h_f h_\infty &= \mu_f x_0 h_\infty \\
&= \mu \mu_\infty^{-1} h_\infty x_0 \\
&= \mu \gamma_\infty t_\infty s x_0 \\
&= \mu \gamma t s (t_f^{-1} \gamma_f^{-1} x_0).
\end{align*}
But now $\mu, \gamma, t \in H(\Q)$.  One has $\gamma_f^{-1}\in U$ and $UX_0 = X_0$, so $t_f^{-1} \gamma_f^{-1} x_0 \in F^{-1} X_0 = X$.
\end{proof}

Let $M_P'$ denote the subgroup of $M_P$ fixing the determinant on $J$ or equivalently fixing the determinant on $J^\vee$. We will apply Theorem \ref{thm:BHCred} to the action of $M_P'(\R)$ on $J^\vee(\R)$.  We obtain the following.
\begin{proposition}\label{prop:redMP} Fix $e \in J$ as in Table \ref{table:globalNotation} and a compact open subgroup $U'$ of $M_P'(\A_f)$.  Let $\Gamma_{P,U'} = M_P'(\Q) \cap U'$.  Then there is a constant $B_{e,U'} > 0$ and a finite subset $F_{e,U'}$ of $M_P'(\Q)$ with the following property: if $T \in J(\R)^\vee$ is positive-definite, then there exists $\gamma \in \Gamma_{P',U'}$ and $t \in F_{e,U'}$ so that $|(T \cdot \gamma t, e)| \leq B_{e,U_M} \det(T)^{1/r}.$
\end{proposition}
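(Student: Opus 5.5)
We reduce the statement to Theorem \ref{thm:BHCred} applied to $H = M_P'$. The first step is to use homogeneity to normalize $\det(T) = 1$: both sides of the desired inequality scale linearly under $T \mapsto \lambda T$ for $\lambda > 0$, since $\det$ is homogeneous of degree $r$ on $J^\vee$ and $T \mapsto (T\cdot\gamma t, e)$ is linear. The second step uses that $M_P'(\R)$ acts transitively, up to the finitely many components, on the positive-definite cone of determinant one. Indeed, for positive-definite $T$ one has $T = 1_J \cdot h$ for some $h\in M_P(\R)$. After multiplying $h$ by a central scalar, which is harmless up to a sign, $h$ can be taken in $M_P'(\R)$. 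Hence it suffices to bound the continuous function $h \mapsto (1_J \cdot h, e)$ on a suitable fundamental set for $\Gamma_{P,U'}$ acting on $M_P'(\R)$.

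To produce that fundamental set, choose the parabolic $R \subseteq M_P'$ adapted to $e$. It is the stabilizer in $M_P'$ of the line $\Q e \subseteq J$, equivalently $R = M_P' \cap Q$. Choose $K_H = K_M \cap M_P'(\R)$; since $K_M$ fixes $1_J$, the Cartan involution is compatible with $R$. Theorem \ref{thm:BHCred} then gives $M_P'(\R) = \Gamma_{P,U'} F U_R^C M_{R,\epsilon} K_H$. We invert this decomposition, writing $h^{-1}$ or $h$ as needed according to the side on which the group acts on $J^\vee$. For a given $T = 1_J\cdot h$ this yields $\gamma\in\Gamma_{P,U'}$ and $t\in F$ such that $T\cdot \gamma t = 1_J \cdot k u m$ with $k\in K_H$, $u \in U_R^C$ and $m \in M_{R,\epsilon}$, or the same product in the reverse order. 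We then set $F_{e,U'} = F$ or $F^{-1}$ accordingly. Because $1_J \cdot k = 1_J$, it remains to bound $(1_J\cdot u m, e) = (1_J, (um)\cdot e)$, using the $M_P$-invariance of the pairing up to the contragredient.

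The key computation is that $R$ stabilizes $\Q e$, so $(um)\cdot e = \chi(m)e$ for a rational character $\chi$ of $R$. Here $u$ acts trivially on the line, since $U_R$ is unipotent and preserves the line. This gives $(1_J, (um)\cdot e) = \chi(m)(1_J,e) = \chi(m)$. The main obstacle is to show that $|\chi|$ is bounded above on $M_{R,\epsilon}$, rather than below. For this we check that $\chi^{-1}$, or $\chi$ under the opposite convention for the orientation of $R$, is a positive combination of the roots in $\Phi_R$. Equivalently, $e$ is a lowest-weight vector for the contragredient side, so that $|\alpha(m)|\ge\epsilon$ for all $\alpha\in\Phi_R$ forces $|\chi(m)|\le C_\epsilon$. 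We will verify this on the explicit torus of diagonal elements scaling $e_{11},\dots,e_{rr}$, or $b_2, b_{-2}, V_0$ in case 1, subject to the determinant-one condition. On this torus $\chi$ is the weight of $e_{11}$ and the roots in $\Phi_R$ are ratios $t_1/t_j$ or their inverses. If the orientation turns out to be wrong, we replace $R$ by its opposite parabolic, which also stabilizes a line complementary to $e$, and the argument goes through in the same way. In either case we set $B_{e,U'} = C_\epsilon$.
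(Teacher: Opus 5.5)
Your skeleton is the paper's: normalize $\det(T)=1$, apply Theorem \ref{thm:BHCred} to $M_P'$ with $R$ the stabilizer of $\Q e$, and reduce to pairing a Siegel-set translate of $1_J$ against $e$. However, the one non-formal step, which you rightly call the main obstacle, has the wrong orientation. Your stated criterion is the wrong one, and your fallback fails.

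\textbf{The correct bookkeeping.} Since $\gamma t$ must act on the right of $T=1_J\cdot h$, the decomposition has to be applied to $h^{-1}$. This is what the paper does: it writes $T_1=1_J\cdot m_1^{-1}$ and decomposes $m_1$. From $h^{-1}=\gamma t u m k$ one gets $T\cdot\gamma t=1_J\cdot m^{-1}u^{-1}$. So the Levi component lies in $M_{R,\epsilon}^{-1}$, not in $M_{R,\epsilon}$, and
\[
(T\cdot\gamma t,e)=(1_J,m^{-1}u^{-1}\cdot e)=\chi(m)^{-1}(1_J,e)=\chi(m)^{-1}.
\]
What you need is therefore that $|\chi|$ is bounded \emph{below} on $M_{R,\epsilon}$. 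Equivalently, $\chi$ itself, not $\chi^{-1}$, must be a positive combination of the roots in $\Phi_R$.

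\textbf{This holds, and your torus check proves it.} In cases 2 and 3 one has $\chi=t_1^2$, and $\Phi_R$ consists of the $t_1/t_j$ with $j\ge 2$. On $\prod_j|t_j|=1$ this gives $|t_1|^r=\prod_{j\ge2}|t_1/t_j|\ge\epsilon^{r-1}$. In case 1, $\chi$ is the unique root in $\Phi_R$. The paper builds exactly this into its definition of $R_e(\epsilon)$ via $|\alpha_e|\ge\epsilon$, without justification. So your explicit verification would be a genuine addition once the direction is corrected.

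\textbf{What fails as written.} Your claim that $|\chi|$ must be bounded above on $M_{R,\epsilon}$ is false for this $R$. Your hedge ``or $\chi$ under the opposite convention'' gestures at the right statement, but the fallback of replacing $R$ by $R^{op}$ breaks the argument in two ways.
\begin{enumerate}
\item $R^{op}$ does not stabilize $\Q e$; for $r\ge 3$ it stabilizes no line of $J$ at all. Hence $u\cdot e\neq e$ for $u\in U_{R^{op}}$, and your key computation $(um)\cdot e=\chi(m)e$ fails.
\item With the correct inversion, already for $u=1$ the pairing equals $\chi(m)^{-1}$ with $m\in M_{R^{op},\epsilon}=M_{R,\epsilon}^{-1}$. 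There $|\chi|$ is bounded above but not below, so $\chi(m)^{-1}$ is unbounded and no bound results.
\end{enumerate}

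The fix is simply to track the inverse as above and keep $R$ equal to the stabilizer of $\Q e$.
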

\begin{proof}
Let $R_e \subseteq M_P'$ be the parabolic subgroup stabilizing the line spanned by $e$ in the action of $M_P'$ on $J$.  Let $\alpha_e: R_e \rightarrow \GL_1$ be the map given by $r e  = \alpha_e(r) e$ for $r \in R_e$.  For $\epsilon > 0$, set $R_e(\epsilon) = \{r \in R_e(r): |\alpha_e(r)| \geq \epsilon\}$.  By Theorem \ref{thm:BHCred}, there is $\epsilon_M > 0$ sufficiently small and a finite subset $F_M$ of $M_P'(\Q)$ so that
\[M_P'(\R) = \Gamma_{P,U'} F_M R_e(\epsilon_M) K_M.\]

Now, suppose $T \in J^\vee(\R)$ is positive-definite, set $\lambda = \det(T)$, and set $T_1 = \lambda^{-1/r} T$ so that $\det(T_1) = 1$.  Then there exists $m_1 \in M_P'(\R)$ with $1_J \cdot m_1^{-1}= T_1$.  We can write $m_1 = \gamma t m_2 k$ with $\gamma \in \Gamma_{P',U'}$, $t \in F_M$, $m_2 \in R_e(\epsilon_M)$ and $k \in K_M$.  Then
\[
\lambda^{1/r} (1_J \cdot m_2^{-1}) = T \cdot (\gamma t).
\]
Moreover,
\[(T \cdot \gamma t, e) = \lambda^{1/r} (1_J \cdot m_2^{-1}, e) = \det(T)^{1/r} \alpha_e(m_2^{-1}).\]
Thus $|(T \cdot \gamma t, e)| \leq \det(T)^{1/r} \epsilon_M^{-1}$.  This proves the proposition.
\end{proof}

We will also need some reduction theory for the action of the Jacobi group $Q'$ on $G(\A)$.  Recall that $Q' = N_Q M_Q' \subseteq Q$ where $M_Q'$ is the centralizer of $Z$ in $M_Q$.  By Corollary \ref{cor:reductionAdelic} applied to the parabolic subgroup $S_Q:=P \cap M_Q'$ of $M_Q'$, we have 
\begin{equation}\label{eqn:MQ'decomp} 
M_Q'(\A) = M_Q'(\Q) S_Q(\epsilon_J) X_J
\end{equation} 
for some compact open subset $X_J \subseteq M_Q'(\A_f)$ and some $\epsilon_J > 0$.

Recall that $\theta: \g_0 \rightarrow \g_0$ denotes the Cartan involution.  Set $h_{e} = -[e,\theta(e)] \in \g_0$, so that $e, h_{e}, -\theta(e)$ form an $\sl_2$-triple.  Define $\lambda: \R^\times_{>0} \rightarrow M_Q(\R)$ as $\lambda(t) = \exp(\log(t) h_e)$.   Note that $\lambda(t)$ lands in the center of $M_Q(\R)$.

\begin{lemma}\label{lem:JacobiReduction}  Suppose $X \subseteq G(\A_f)$ is some open subset and $Y = Q'(\A_f) X$.  With notation as above, we have
\[
Y \times G(\R) = Q'(\Q) N_Q(\R) S_Q(\epsilon_J) \lambda(\R^\times_{>0}) K \cdot (X_JX).
\]
Moreover, suppose $U_1 \subseteq N_Q(\A_f)$ is an open compact subgroup with the property that $U_1 (X_J X) = X_J X$.  Then there is a compact subset $N_Q^{U_1}$ of $N_Q(\R)$, depending only on $U_1$, so that 
\[
Y \times G(\R) = Q'(\Q) N_Q^{U_1} S_Q(\epsilon_J) \lambda(\R^\times_{>0}) K \cdot (X_JX).
\]
\end{lemma}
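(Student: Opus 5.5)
Begin with a real element $g \in G(\R)$ and a finite-adelic element $y \in Y$; write $y = q_f x$ with $q \in Q'(\A_f)$ and $x \in X$, so $y g = q_f x g$. The plan is to write $q_f g$ as an element of $Q'(\A)$ times $G(\R)$, and then decompose that product as in the statement.

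\textbf{Step 1 (Iwasawa decomposition).} Since $Q$ is a parabolic subgroup, $G(\R) = Q(\R) K$. Write $Q(\R) = N_Q(\R) M_Q(\R)$. Because $M_Q$ acts on the line $\Q e$ through a character, $M_Q(\R) = M_Q'(\R)\,\lambda(\R^\times_{>0})$ up to a finite group. The scaling $\lambda(t)$ multiplies $e$ by $t^2$, so the map $t \mapsto$ (scalar on $e$) is surjective onto positive scalars. The remaining sign/component ambiguity lies in $M_Q(\R) \cap K$, which is absorbed into $K$; here the standing assumption that $G(\R)$ is connected helps. Since $\lambda(t)$ is central in $M_Q(\R)$, it commutes past $M_Q'(\R)$. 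This gives
\[G(\R) = N_Q(\R) M_Q'(\R) \lambda(\R^\times_{>0}) K.\]

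\textbf{Step 2 (reduce the $M_Q'$ part).} Combine the adelic element $q_f = n_f m_f$, with $n_f \in N_Q(\A_f)$ and $m_f \in M_Q'(\A_f)$, with the real part $m_\infty \in M_Q'(\R)$ from Step 1, to get $m = m_f m_\infty \in M_Q'(\A)$. By \eqref{eqn:MQ'decomp}, $m = \delta\, s\, x_J$ with $\delta \in M_Q'(\Q)$, $s \in S_Q(\epsilon_J)$ and $x_J \in X_J$. Since $M_Q'$ normalizes $N_Q$, the $N_Q(\A)$ factors can be moved to the left; then $N_Q(\A) = N_Q(\Q) N_Q(\R) N_Q(\A_f)$. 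Finite-adelic elements commute with real ones, so $x_J$ and the leftover finite parts migrate to the right and combine with $x$. The remaining difficulty is the finite part of $N_Q$: it must be put to the left, into $N_Q(\Q)$ together with a real piece, using strong approximation for the unipotent group $N_Q$: $N_Q(\A_f) = N_Q(\Q)_f \cdot(\text{compact open})$. For the first claim, however, one can simply keep the $N_Q(\A_f)$ piece and note that $Y$ is $Q'(\A_f)$-stable. More carefully, order the factors as $Q'(\A_f)$-part times $X_J X$. Strictly, the claim has no $N_Q(\A_f)$ factor, so we must write $N_Q(\A_f) \subseteq N_Q(\Q) N_Q(\R) U_1'$ for a compact open $U_1'$, and reinterpret $U_1'$ acting on $X_J X$. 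This is exactly the role of the hypothesis $U_1(X_JX) = X_JX$ in the second statement.

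\textbf{Step 3 (compact $N_Q$ piece).} For the refined statement, use that $N_Q(\Q) \cap U_1$ is a lattice-type subgroup of $N_Q(\R)$. Since $N_Q$ is a Heisenberg group, $N_Q(\Q) \backslash N_Q(\A) / U_1$ is compact, so $N_Q(\A) = N_Q(\Q)\, N_Q^{U_1}\, U_1$ for a compact $N_Q^{U_1} \subseteq N_Q(\R)$. The remaining task is to commute $S_Q(\epsilon_J)\lambda(t) K$ past the $N_Q$ factor, so that the $N_Q$ element sits directly after $Q'(\Q)$. Apply the decomposition to the $N_Q(\A)$-component that appears to the left of $s\lambda(t)$, which is possible since $M_Q'$ normalizes $N_Q$. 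Then absorb $U_1$, which is finite-adelic and so commutes with the real elements, into $X_J X$ via $U_1 X_J X = X_J X$.

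\textbf{Main obstacle.} I expect the main obstacle to be the bookkeeping in Step 2. One needs all finite-adelic $N_Q$-contributions to end up inside a compact open subgroup that fixes $X_J X$, which forces ordering the decomposition so that $x_J$ is applied after the $N_Q$ factor. I would try first conjugating $U_1$ by $X_J$, using compactness of $X_J$ to shrink it to a uniform compact open subgroup.
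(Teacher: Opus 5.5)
Your argument for the second statement is sound and is essentially the paper's. You use the Iwasawa decomposition $G(\R)=N_Q(\R)M_Q'(\R)\lambda(\R^\times_{>0})K$ and reduction theory for $m_fm_\infty\in M_Q'(\A)$. You then conjugate the $N_Q(\A)$ factor past $\delta\in M_Q'(\Q)$ to get $yg=\delta\,n'\,s\,\lambda(t)\,k\,(x_Jx)$ with $n'\in N_Q(\A)$. Finally you write $N_Q(\A)=N_Q(\Q)N_Q^{U_1}U_1$ and absorb the leftover $u_1$ using $U_1(X_JX)=X_JX$. Your ``main obstacle'' is not one: in this ordering $u_1$ already sits directly to the left of $x_Jx$. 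So you need neither to conjugate $U_1$ by $X_J$ nor any uniformity.

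The gap is in the first statement. There no $U_1$ with $U_1(X_JX)=X_JX$ is given, and for $X$ merely open none need exist, so you cannot borrow the hypothesis of the second statement. Likewise, $Q'(\A_f)$-stability of $Y$ does not remove the finite-adelic $N_Q$ factor, which the claimed decomposition does not have.

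The missing idea, which is what the paper uses, is that $X$ is open; you never use this. The identity is checked point by point, and strong approximation $N_Q(\A)=N_Q(\Q)N_Q(\R)U'$ holds for every compact open $U'\subseteq N_Q(\A_f)$. So you may choose $U'$ depending on the point. Since $x_J(Xx^{-1})x_J^{-1}\cap N_Q(\A_f)$ is an open neighbourhood of $1$ in $N_Q(\A_f)$, pick a compact open subgroup $U'$ inside it; then $U'x_Jx\subseteq x_JX\subseteq X_JX$. Now write $n'=\nu\,n_\R\,u'$ with $\nu\in N_Q(\Q)$, $n_\R\in N_Q(\R)$ and $u'\in U'$. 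This gives $yg=(\delta\nu)\,n_\R\,s\,\lambda(t)\,k\,(u'x_Jx)$ with $\delta\nu\in Q'(\Q)$ and $u'x_Jx\in X_JX$, which is the first statement. The second statement then follows from the first by decomposing $n_\R\in N_Q(\A)$ further as an element of $N_Q(\Q)N_Q^{U_1}U_1$, exactly as in your Step 3.
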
 
\begin{proof} We have $y = n_f m_f x$ with $n_f \in N_Q(\A_f)$, $m_f \in M_Q(\A_f)$ and $x\in X$.  By the Iwasawa decomposition we have $g = n m t k$ with $n \in N_Q(\R)$, $m \in M_Q'(\R)$, $t \in \lambda(\R^\times_{>0})$ and $k \in K$. Thus $yg = n n_f (m_f m) tk x$.  By reduction theory for $m_f m \in M_Q'(\A)$, we have $m_f m = \gamma_\Q m_1 x_1$ with $\gamma_\Q \in M_Q'(\Q)$, $m_1 \in S_Q(\epsilon_J)$ and $x_1 \in X_J$.  Thus
\[
yg = n n_f (\gamma_\Q m_1) tk x_1 x = \gamma_\Q n' n_f' m_1 t k (x_1 x).
\]
Let $U'$ be a sufficiently small open compact subgroup of $N_Q(\A_f)$.  By strong approximation for the additive group $\A_f$, we have $n_f' = n_\Q n_\infty^{-1} u'$ for some $u' \in U$.  Thus
\[
yg = \gamma_\Q n' (n_\Q n_\infty^{-1} u') m_1 tk (x_1 x) = \gamma_\Q n_\Q (n_\infty^{-1} n') m_1 t k (u' x_1 x).
\]
Here $\gamma_\Q n_\Q \in Q'(\Q)$, $n_\infty^{-1} n' \in N_Q(\R)$, $m_1 \in S_Q(\epsilon_J)$, $t \in \lambda(\R^\times_{>0})$ and $k \in K$.  Since $X$ is assumed open, there is $U' \subseteq N_Q(\Q_f)$ sufficiently small so that $U' x_1 X \subseteq x_1 X$.  This completes the proof of the first statement.  The second statement follows immediately from the first.
\end{proof}

\section{Quantitative Sturm bound}\label{sec:QSBs}
In this section we prove a quantitative Sturm bound for cuspidal holomorphic Jacobi forms.  Recall that we write $SJ_{\rho,m}(Y,U)$ for the cuspidal Jacobi forms of weight $\rho$, index $m$, domain $Y$ and level $U$.

\begin{lemma}\label{lem:NPMQKf} Fix an open compact subgroup $K_f$ of $G(\A_f)$.  There is a finite subset $F$ of $G(\A_f)$ so that $G(\A_f) = P(\Q)_f F K_f$ and $G(\A_f) = Q(\Q)_f F K_f.$
\end{lemma}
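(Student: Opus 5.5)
The plan is to use finiteness of class numbers for parabolic subgroups, together with the fact that $P$ and $Q$ are parabolic, hence $G(\A_f) = P(\A_f) K_0$ for a suitable maximal compact $K_0$ (Iwasawa decomposition at almost all places). Concretely, first observe that $G/P$ is projective, so $G(\Q_p) = P(\Q_p) K_p$ for every $p$ and any compact open $K_p$ up to a finite set of cosets. Equivalently, $P(\Q_p)\backslash G(\Q_p)$ is compact, so finitely many $K_{f,p}$-orbits cover it. Moreover, at almost all $p$ one has $G(\Q_p) = P(\Q_p) G(\Z_p)$ with $G(\Z_p) = K_{f,p}$. Hence $P(\A_f)\backslash G(\A_f)/K_f$ is finite. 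Choose representatives $F_P \subseteq G(\A_f)$, so that $G(\A_f) = P(\A_f) F_P K_f$, and do the same for $Q$ to get $F_Q$.

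Next I reduce from $P(\A_f)$ to $P(\Q)_f$. Write $P = N_P M_P$. For each $f \in F_P$, the group $K_f' = P(\A_f) \cap f K_f f^{-1}$ is compact open in $P(\A_f)$, and I need $P(\A_f) = P(\Q)_f C_f K_f'$ for some finite set $C_f$. This is finiteness of class numbers for the linear algebraic group $P$ (Borel). Alternatively I can argue directly: $N_P$ is unipotent, hence satisfies strong approximation, so $N_P(\A_f) = N_P(\Q) V$ for any compact open $V$. The Levi $M_P$ is reductive, so $M_P(\Q)\backslash M_P(\A_f)/(\text{compact open})$ is finite. Combining these, using that $M_P(\Q)$ normalizes $N_P$ and shrinking $V$ so it is normalized by the compact open piece of $M_P$, gives $P(\A_f) = P(\Q)_f C_f K_f'$. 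Then
\[
G(\A_f) = \bigcup_{f\in F_P} P(\Q)_f C_f K_f' f K_f = \bigcup_{f \in F_P} P(\Q)_f C_f f K_f,
\]
since $K_f' f \subseteq f K_f$. The same argument applies to $Q$.

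Finally, set $F = \{c f : f \in F_P, c \in C_f\} \cup \{c f : f \in F_Q, c \in C_f^Q\}$. Enlarging the finite set is harmless, because $P(\Q)_f F K_f \subseteq G(\A_f)$ trivially, so both equalities hold with this common $F$. The main obstacle is the class-number finiteness for the non-reductive group $P$, specifically the interaction between strong approximation for $N_P$ and the finite class number of $M_P$. I will handle it by choosing the compact open subgroup of $N_P(\A_f)$ stable under conjugation by the compact open piece of $M_P(\A_f)$, or else simply cite Borel's finiteness theorem for arbitrary linear algebraic groups over $\Q$.
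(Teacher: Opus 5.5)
Your proposal is correct and follows essentially the same route as the paper. Both arguments first get $G(\A_f)=P(\A_f)F_0K_f$ from the Iwasawa decomposition (compactness of $G/P$), then use finiteness of the class number of $M_P$ relative to $M_P(\A_f)\cap\bigcap_{x_0}x_0K_fx_0^{-1}$ (the paper loosely calls this ``weak approximation''), and finally strong approximation for $N_P$.

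One point should be stated carefully. The compact open subgroup of $N_P(\A_f)$ used in strong approximation must be allowed to depend on the rational Levi element $m$; the paper takes $V_1=N_P(\A_f)\cap mU_1m^{-1}$. A single subgroup normalized by the compact part of $M_P$ does not handle conjugation by arbitrary rational $m$. Your fallback of citing Borel's finiteness theorem for arbitrary linear algebraic groups also settles this.
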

\begin{proof} By the Iwasawa decomposition, there is a finite subset $F_0$ of $G(\A_f)$ so that $G(\A_f) = P(\A_f) F_0 K_f$.  Let $K' = \cap_{x_0 \in F_0}{x_0 K_f x_0^{-1}}$, and set $U' = M_P(\A_f) \cap K'$.  By weak approximation for $M_P$, one has a finite subset $F_1$ of $M_P(\A_f)$ so that $M_P(\A_f) = M_P(\Q)_f F_1 U'$.  Setting $F = F_1 \cdot F_0$ gives that $G(\A_f) = N_P(\A_f) M_P(\Q) F X$.  

Now, take an open compact subgroup $U_1$ of $G(\A_f)$ satisfying $U_1 (F K_f) = F K_f$.  Suppose $g_f \in G(\A_f)$ and $g_f = n m y$ with $n \in N_P(\A_f)$, $m \in M_P(\Q)_f$ and $y \in F K_f$.  Let $V_1 = N_P(\A_f) \cap m U_1 m^{-1}$.  This is an open compact subgroup of $N_P(\A_f)$.  By strong approximation for unipotent groups, $N_P(\A_f) = N_P(\Q)_f V_1$.  The result for $P$ follows, and the proof for the parabolic subgroup $Q$ is similar.
\end{proof}

Fixing a compact open subset $X$ of $G(\A_f)$, the next lemma shows that the $T \in J^\vee(\Q)$ for which $a_T$ can be nonvanishing on $X$ lie in some lattice.

\begin{lemma}\label{lem:WhitTsupport} Suppose $U$ is an open compact subgroup of $G(\A_f)$ and $X \subseteq G(\A_f)$ is an open compact subset with $X U = X$.  There is a lattice $J_0 \subseteq J^\vee(\Q)$ depending only on $X$ and $U$ with the following property: If $a_{T}: N_P(\A_f) X \rightarrow \C$ is a right $U$-invariant function satisfying $a_T(n(x) r) = \psi((T,x)) a_T(r)$ for all $x \in J(\A_f)$ and $r \in X$, then $a_T(r) \neq 0$ for some $r \in X$ implies $T \in J_0$.
\end{lemma}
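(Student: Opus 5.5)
The plan is to combine right $U$-invariance with the equivariance under $N_P(\A_f)$, using compactness of $X$ to make the choice of lattice uniform. For each $r \in X$, consider the compact open subgroup $rUr^{-1}$ of $G(\A_f)$ and intersect it with $N_P(\A_f)$. This gives a compact open subgroup $V_r = \{x \in J(\A_f) : n(x) \in rUr^{-1}\}$ of $J(\A_f)$. If $x \in V_r$, then $n(x) r = r u$ for some $u \in U$. Right $U$-invariance then gives $a_T(n(x)r) = a_T(r)$, while the equivariance gives $a_T(n(x)r) = \psi((T,x)) a_T(r)$. Hence, if $a_T(r) \neq 0$, we must have $\psi((T,x)) = 1$ for all $x \in V_r$.

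The next step is to make $V_r$ independent of $r$. Since $XU = X$ and $X$ is compact, $X$ is a finite union of cosets $r_1 U, \dots, r_k U$. Moreover, the condition for $r = r_i u$ reduces to the condition for $r_i$: we have $(r_iu)U(r_iu)^{-1} = r_i U r_i^{-1}$, so $V_{r_i u} = V_{r_i}$. Set
\[
V = \bigcap_{i=1}^k V_{r_i}.
\]
This is a compact open subgroup of $J(\A_f)$. Since $J(\A_f) = J(\Q)\otimes \A_f$, any compact open subgroup contains $L \otimes \widehat{\Z}$ for some lattice $L \subseteq J(\Q)$; choose such an $L$ with $L \otimes \widehat{\Z} \subseteq V$. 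The conclusion so far is that $a_T(r) \neq 0$ for some $r \in X$ implies $\psi((T,x)) = 1$ for all $x \in L\otimes \widehat{\Z}$.

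The last step translates this character condition into a lattice condition on $T$. For $T \in J^\vee(\Q)$, the map $x \mapsto (T,x)$ sends $L \otimes \widehat{\Z}$ onto $M \otimes \widehat{\Z}$, where $M = (T,L)$ is a finitely generated subgroup of $\Q$, hence equal to $c\Z$ for some $c \in \Q$. The standard character $\psi_f$ restricted to $\widehat{\Q} = \A_f$ is trivial on $c\widehat{\Z}$ precisely when $c \in \Z$. This uses that $\psi_f$ is trivial on $\widehat{\Z}$ and on no larger $\widehat{\Z}$-submodule of the form $c\widehat{\Z}$, which follows from the standard description $\psi_p(x) = e^{-2\pi i\{x\}_p}$ (up to sign conventions). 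Therefore $(T,L) \subseteq \Z$, i.e. $T$ lies in the dual lattice $J_0 := L^\vee = \{T : (T,L)\subseteq \Z\}$. This is a lattice in $J^\vee(\Q)$ depending only on $X$ and $U$.

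No step is a genuine obstacle. The only points requiring care are the reduction to finitely many coset representatives, which uses compactness of $X$ and $XU = X$, and the normalization of $\psi$ on $\A_f$ that makes its conductor equal to $\widehat{\Z}$.
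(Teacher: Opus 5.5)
Your proposal is correct and follows essentially the same route as the paper: write $X$ as finitely many cosets $r_jU$, intersect the conjugates $r_jUr_j^{-1}$ with $N_P(\A_f)$ to get a compact open subgroup of $J(\A_f)$, and combine right $U$-invariance with the $N_P$-equivariance to force $\psi((T,\cdot))$ to be trivial on that subgroup whenever $a_T$ is nonzero on $X$. The only difference is that you spell out the final step, which the paper leaves implicit: you choose a lattice $L$ with $L\otimes\widehat{\Z}$ inside that subgroup, and you use that $\psi_f$ has conductor $\widehat{\Z}$ to conclude $T \in J_0 = L^\vee$.
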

\begin{proof} We can write $X$ as a finite disjoint union $X = \bigsqcup_{j=1}^{H} r_j U$.  Let $V = \bigcap_{j=1}^{H} r_j U r_j^{-1}$, which is an open compact subgroup of $G(\A_f)$.  The set of $x \in J(\A_f)$ with $n(x) \in V$ is an open compact subgroup $W_0$ of $J(\A_f)$.  If $w \in W_0$, then
\[
\psi((T,w)) a_T(r_j) = a_T(n(w) r_j) = a_T(r_j (r_j^{-1} n(w) r_j)) = a_T(r_j)
\]
because $r_j^{-1} n(w) r_j \in U$.  Thus $a_T(r_j)  \neq 0$ for some $j$ implies $(T,w) \in \widehat{\Z}$ for all $w \in W_0$.  Since $T \in J^\vee(\Q)$ is rational, this implies $T$ sits in some lattice $J_0$ of $J^\vee(\Q)$.
\end{proof}

To estimate certain sums that appear in the proof of the quantitate Sturm bound, we will use the following two lemmas.  For a positive number $t$, let
\[
Q_{t} :=\{T \in J^\vee(\R): T \geq 0, (T,1_J) \leq t\}.
\]
\begin{lemma}\label{lem:J0LatticeCountTr} For $t > 0$, the set $Q_t = t \cdot Q_{1}$ is compact.  If $J_0 \subseteq J^\vee(\R)$ is a fixed lattice, then $\#(J_0 \cap Q_t)$ is $O(t^{\dim(J)})$.
\end{lemma}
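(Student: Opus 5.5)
The plan is to first show that $Q_1$ is compact, and then get $Q_t = t\cdot Q_1$ and the lattice count from scaling. The scaling identity is immediate. The cone $\{T \geq 0\}$ is invariant under multiplication by positive reals, and $(tT,1_J) = t(T,1_J)$. So $T \mapsto tT$ is a bijection from $Q_1$ onto $Q_t$, and compactness of $Q_t$ follows from that of $Q_1$.

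For compactness of $Q_1$, I would first argue that it is closed. Positive semi-definite elements form the closure of the open cone of positive-definite ones, so $Q_1$ is the intersection of a closed cone with a closed half-space. For boundedness I would use the trace pairing identification $J^\vee(\R)\cong J(\R)$ from the start of Section 2. Under it, the semi-definite $T$ are exactly the elements of the closed cone of squares in the Euclidean Jordan algebra $J(\R)$. By the spectral theorem in a Euclidean Jordan algebra, such a $T$ can be written as $T=\sum_{j=1}^r \lambda_j c_j$ with $\lambda_j\geq 0$ and $\{c_j\}$ a Jordan frame. Then $(T,1_J) = \tr(T) = \sum_j \lambda_j$ (up to the normalization of the trace form, which is a fixed positive constant). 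Moreover $(T,T) = \sum_j \lambda_j^2 (c_j,c_j)$, and $(c_j,c_j)$ is a bounded positive constant for primitive idempotents. Hence $(T,T) \leq C (\sum_j \lambda_j)^2 \leq C$ on $Q_1$. Since the trace form is positive definite on $J(\R)$, this shows that $Q_1$ is bounded.

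Alternatively, one can avoid spectral theory and check the three cases of Table~\ref{table:globalNotation} directly. For Hermitian matrices with $T\geq 0$, the diagonal entries are nonnegative, and each $2\times 2$ principal minor inequality gives $|T_{ij}|^2 \leq T_{ii}T_{jj}$. Thus all entries are bounded by $\tr(T)$. For $J = H\oplus V_0$, semi-definiteness means $T = \alpha b_2 + \beta b_{-2} + v$ with $\alpha,\beta\geq 0$ and $\alpha\beta \geq -q(v)$, where $q$ is negative definite on $V_0$. Together with $\alpha+\beta\leq 1$, this bounds $|q(v)|$.

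For the count, I would fix a basis of $J_0$ and a fundamental parallelotope $D$ for $J_0$ in $J^\vee(\R)$, which has finite volume. Since $Q_1$ is compact, it lies in a ball $B(0,R)$. Then $J_0\cap Q_t \subseteq J_0 \cap B(0,tR)$. The translates $\lambda + D$ with $\lambda\in J_0\cap B(0,tR)$ are disjoint and contained in $B(0, tR + \mathrm{diam}(D))$. Comparing volumes gives $\#(J_0\cap Q_t) \leq \mathrm{vol}(B(0,tR+d))/\mathrm{vol}(D) = O((t+1)^{\dim J})$. This is $O(t^{\dim J})$ for $t\geq 1$. For small $t$ the count is bounded anyway, and in the intended application $t$ is large. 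The only step that needs any care is the boundedness of $Q_1$, and the Jordan-frame argument (or the casewise minor inequalities) handles it.
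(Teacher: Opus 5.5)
Your proof is correct and takes essentially the same route as the paper. Both use the spectral theorem to bound $Q_1$: the paper exhibits $Q_1$ inside the image of the compact set $[0,1]^r \times K_M$ under $((t_j),k)\mapsto(\sum_j t_j e_{jj})\cdot k$, while you compute $(T,T)\le C$ along a Jordan frame; then closedness gives compactness, and scaling plus a lattice-point count gives the estimate. Your volume argument fills in a step the paper leaves implicit, as does your remark that the bound must be read for $t\ge 1$, since $0\in J_0\cap Q_t$ for every $t>0$.
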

\begin{proof}  Define a map $[0,1]^r \times K_M \rightarrow J^\vee(\R)$ as 
\[((t_1, \cdots, t_r), k) \mapsto (t_1 e_{11} + \cdots + t_r e_{rr}) \cdot k.\]
By the spectral theorem \cite[Theorem III.1.2 and Corollary IV.2.7]{farautKoranyi}, $Q_1$ is contained in the image of the map.  As $Q_t = t \cdot Q_1$ is closed, the compactness follows.  The estimate on the number of lattice points also follows.
\end{proof}

\begin{lemma}\label{lem:deltaSum} Fix a lattice $J_0 \subseteq J^\vee(\Q)$.  Then there is a constant $C_{J_0} > 0$ so that for every real number $\delta > 0$ one has
\[
\sum_{T \in J_0, T > 0}e^{-\delta (T,1)} < \frac{C_{J_0}}{\delta^{\dim(J)+1}}.
\]
\end{lemma}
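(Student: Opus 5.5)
The plan is to organize the sum over $T$ by the value of the trace $(T,1_J)$ and then apply Lemma \ref{lem:J0LatticeCountTr}. Since $J_0$ is a lattice in $J^\vee(\Q)$, the pairing $(T,1_J)$ on $J_0$ takes values in a discrete subgroup $c\Z$ of $\R$ for some $c>0$ depending only on $J_0$. (To see this, note that $1_J$ pairs rationally with a $\Z$-basis of $J_0$, so the values lie in $\frac{1}{N}\Z$ for some $N$.) Also, if $T>0$ then $(T,1_J)>0$, because $1_J$ is positive-definite and the pairing of a positive-definite $T$ with a positive $Y$ is positive. Hence every $T \in J_0$ with $T>0$ satisfies $(T,1_J) \in c\Z_{\geq 1}$.

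Next I group the terms according to this value. For $k \geq 1$, let $A_k$ be the set of $T \in J_0$ with $T>0$ and $(T,1_J)=ck$. Then $A_k \subseteq J_0 \cap Q_{ck}$, and Lemma \ref{lem:J0LatticeCountTr} gives $\#A_k \leq C_1 (ck)^{\dim(J)} = C_2 k^{\dim(J)}$ for a constant $C_2$ depending only on $J_0$. Writing $d=\dim(J)$, it follows that
\[
\sum_{T \in J_0, T>0} e^{-\delta(T,1)} \leq C_2 \sum_{k \geq 1} k^{d} e^{-c\delta k}.
\]

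It remains to bound $\sum_{k\geq 1} k^d x^k$, where $x=e^{-c\delta}$. I would compare the sum with an integral, or use the standard estimate $\sum_k k^d x^k \leq d!\,x/(1-x)^{d+1}$ up to a constant factor. The function $k^d e^{-c\delta k}$ is unimodal, so the sum is at most $\int_0^\infty u^d e^{-c\delta u}\,du$ plus twice the maximum of the function. The integral equals $d!/(c\delta)^{d+1}$, and the maximum equals $(d/(c\delta e))^d$.

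The only real subtlety is that the claim must hold for every $\delta>0$, including large $\delta$, and the bound has exponent $\dim(J)+1$. For small $\delta$ (say $\delta \leq 1$), both contributions are $O(\delta^{-(d+1)})$, since $\delta^{-d} \leq \delta^{-d-1}$ there. For $\delta \geq 1$, I bound the sum directly: it is at most $e^{-c\delta}\sum_k k^d e^{-c(k-1)} = O(e^{-c\delta})$, and this is $O(\delta^{-(d+1)})$ because exponential decay beats any power. Taking $C_{J_0}$ to be the larger of the two constants, made strictly larger to get the strict inequality, completes the argument.
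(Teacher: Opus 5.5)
Your proof is correct and follows the paper's route. Both arguments group the terms by the discrete trace value, bound each shell with Lemma \ref{lem:J0LatticeCountTr}, and then estimate $\sum_k k^{d}e^{-c\delta k}$; the paper does this last step by splitting $e^{-\delta n/M}$ into two halves and summing a geometric series, where you compare with $\int_0^\infty u^d e^{-c\delta u}\,du$. Your separate treatment of $\delta\ge 1$ is actually more careful than the paper's final step: there the inequality $\frac{1}{1-e^{-\delta/(2M)}}\le C\delta^{-1}$ fails for large $\delta$, and it only holds for all $\delta$ if the geometric series is started at $n=1$.
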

\begin{proof} There is a positive integer $M$ so that every element of $J_0$ has trace landing in $M^{-1} \Z$.  By Lemma \ref{lem:J0LatticeCountTr}, there is $C_{1,J_0} > 0$ so that if $n$ is a positive integer, the number of positive-definite elements of $J_0$ with trace equal to $\frac{n}{M}$ is at most $C_{1,J_0} n^{\dim(J)}$.  We obtain
\begin{align*}
\sum_{T \in J_0, T > 0}e^{-\delta (T,1)} &\leq \sum_{n \geq 0} C_{1,J_0} n^{\dim(J)} e^{-\delta n/M} \\
&\leq C_{1,J_0} \delta^{-\dim(J)} \sum_{n \geq 0} \left((n\delta)^{\dim(J)}e^{-\delta n/(2M)}\right) e^{-\delta n/(2M)} \\
&\leq C_{1,J_0} C_M \delta^{-\dim(J)} \left(\sum_{n\geq 0}e^{-\delta n/(2M)}\right) \\
&\leq C_{1,J_0} C_M \delta^{-\dim(J)}\frac{1}{1-e^{-\delta/(2M)}} \\
&\leq C_{J_0} \delta^{-(\dim(J)+1)}.
\end{align*}
Here $C_M > 0$ is such that $x^{\dim(J)} e^{-x/(2M)} \leq C_M$ for all $x \geq 0$.
\end{proof}

We are now ready to state our quantitative Sturm bound for cuspidal Jacobi forms.
\begin{theorem}[Quantitative Sturm bound]\label{thm:QSBtake1} Fix a weight $\rho$, an open compact subset $X$ of $G(\A_f)$, and an open compact subgroup $U$ of $G(\A_f)$ satisfying $XU = X$.  Let $J_0 \subseteq J^\vee(\Q)$ be as in Lemma \ref{lem:WhitTsupport} and let $X_J$ be as in equation \eqref{eqn:MQ'decomp}.  Let $X_1 = X_J X$ and $Y = Q'(\A_f) X$.  There are positive constants $D$, $R=R_{\rho,X,U}$ and $B=B_{\rho,X,U}$ with the following property: Suppose $\eta \in SJ_{\rho,m}(Y,U)$ is a holomorphic cuspidal Jacobi form of weight $\rho$, index $m$, domain $Y$ and level $U$.  Let $\beta_{T}$ denote the normalized absolute Fourier coefficients of $\eta$; see Definition \ref{defn:betaTuT}.  If $\epsilon \geq 0$ and $\beta_T(x) \leq \epsilon$ for all $x \in X_1$ and $T \in J_0$ with $\det(T) \leq m \log(m)^{r-1} R_{\rho,X,U}$, then $||\eta||_Y \leq  \epsilon m^{D} e^{-2\pi r} B_{\rho,X,U}$.  In particular, $\beta_T(g) \leq \epsilon m^{D} B_{\rho,X,U}$ for all $T \in J^\vee$ and $g \in Y$.
\end{theorem}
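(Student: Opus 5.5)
The plan is to bound $\|\eta\|_Y$ directly. We bound $\eta$ pointwise on a Siegel-type fundamental set, using the Fourier expansion split into ``small'' and ``large'' $T$. By Lemma \ref{lem:JacobiReduction}, every point of $Y\times G(\R)$ is $Q'(\Q)$-equivalent to $n\, s\, \lambda(t)\, k\, x_1$. Here $n\in N_Q^{U_1}$ is compact, $s\in S_Q(\epsilon_J)$, $t>0$, $k\in K$ and $x_1\in X_1$. Since $\eta$ is left $Q'(\Q)$-invariant, and $\|\rho(J(k,i1_J))\cdot\|$ is preserved, it suffices to bound
\[
\|\eta(x_1 g)\| = \Bigl\|\rho(J(g,i1_J))^{-1}\sum_{(T,e)=m} a_T(x_1)q^T\Bigr\|
\]
for $g = n s\lambda(t)$. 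Write $Z=g\cdot i1_J = X+iY_g$. Rewriting in terms of the $\beta_T$, each term has size
\[
\beta_T(x_1)\,\|\rho(J(u_Tg,i1_J))^{-1}\|\,e^{-2\pi (T,Y_g)}.
\]
The norm of $\rho$ is controlled polynomially by the eigenvalues of $Y_g$ and of $T$. By Lemma \ref{lem:WhitTsupport} only $T\in J_0$ occur, and by cuspidality only $T>0$.

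\textbf{Step 1: the region of $Z$.} In the reduced coordinates, $Y_g$ decomposes via \eqref{eqn:J_directSum3}. The $e_{11}$-component is governed by $\lambda(t)$ and $\alpha_e$. The $J_1$-part lies in a Siegel set for $M_Q'$, so its eigenvalues are bounded below by a constant $c>0$; the Heisenberg variable is compact. Two regimes arise. In the first, $Y_g$ is ``large'', say $(T,Y_g)\ge c'(T,1_J)$ for all $T\ge0$ with a uniform $c'$. Then $\sum_T\beta_T e^{-2\pi(T,Y_g)}$ is controlled by Lemma \ref{lem:deltaSum}, provided we have \emph{some} a priori bound on all $\beta_T$. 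In the second regime, the $e_{11}$-direction of $Y_g$ is small, of size about $1/t$ relative to $m$. Here the index-$m$ transformation law is used: $\eta(n(ze)h)=\psi(mz)\eta(h)$, together with the theta-type decomposition in the Heisenberg variable. These show that $(T,Y_g)\gtrsim$ const whenever $(T,e)=m$, because $(T,e)=m$ forces the $e_{11}$-component of $T$ to equal $m$.

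\textbf{Step 2: splitting the sum.} Split the sum into $T$ with $\det(T)\le m\log(m)^{r-1}R$ and the rest. For the small $T$, use the hypothesis $\beta_T\le\epsilon$. Their number is polynomial in $m$ by Lemma \ref{lem:J0LatticeCountTr}, and the $\rho$-factors are polynomial in $m$. For the tail, bound $\beta_T\le e^{2\pi r}\|\eta\|_Y$ via the lemma preceding Section \ref{sec:reductionTheory}. By Lemma \ref{lem:detIneq3part} and AM--GM, $(T,Y_g)\ge r\det(T)^{1/r}\det(Y_g)^{1/r}$. Together with Step 1, the tail then contributes at most $\tfrac12\|\eta\|_Y$, once $R$ is chosen large. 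The $\log(m)^{r-1}$ is exactly what makes $e^{-c\det(T)^{1/r}}$ beat the polynomial lattice count. This yields
\[
\|\eta\|_Y\le \epsilon\, m^D B' + \tfrac12\|\eta\|_Y .
\]
Since $\eta$ is cuspidal it is bounded, so $\|\eta\|_Y<\infty$, and absorbing gives $\|\eta\|_Y\le 2B'\epsilon m^D$. The final claim then follows from the lemma preceding Section \ref{sec:reductionTheory}.

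\textbf{Main obstacle.} The hard step is the uniform lower bound on $(T,Y_g)$ for $(T,e)=m$ and $g$ in the reduced set, uniform in $t$. When $t$ is small, $Y_g$ is small in the $e_{11}$-direction. One must then use the index condition together with the cuspidality and Siegel-set bounds on the $J_1$-block to get $(T,Y_g)\ge c\,(m\cdot y_{11}+\text{tr}(T_1Y_1))$ with constants independent of $m$. I would first try to handle this by explicitly computing the $\sl_2$-action of $\lambda(t)$ on $J$ (weights $2,1,0$ on $e_{11},V_0,J_1$). I would then complete the square in $V_0$ using the positivity of $T$.
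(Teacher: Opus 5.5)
Your skeleton matches the paper's: Jacobi reduction via Lemma~\ref{lem:JacobiReduction}, a split at $\det(T)\approx m\log(m)^{r-1}R$, the hypothesis on the head, $\beta_T\le e^{2\pi r}\|\eta\|_Y$ on the tail, and absorption. However, both analytic estimates you propose fail, and they are the substance of the theorem.

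\emph{Head.} The set of $T\in J_0$ with $T>0$, $(T,e)=m$ and $\det(T)\le m\log(m)^{r-1}R$ is infinite. Already for $\Sp_4$, take $T=\mm{m}{b}{b}{c}$: for every lattice value of $b$ there is a lattice value $c$ with $0<mc-b^2\le A$ once $A\gg m$. So ``their number is polynomial in $m$'' is false; Lemma~\ref{lem:J0LatticeCountTr} controls trace, not determinant. What you actually need is $\sum_{T\in J_0,\,T>0,\,(T,e)=m}e^{-\pi(T,Y_g)}=O(m^D)$ uniformly on the reduced set, in particular uniformly in $t$. The paper gets this by discarding the term $t^2m\ge 0$ and using only the $J_1$-block. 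Write $T=(m,x_0,T_0)$ and use three facts: $Y_1=m_1\cdot 1_{r-1}>\epsilon_J 1_{r-1}$, the Heisenberg variable $x_1$ lies in a compact set, and $(x_0,x_0)<2m\tr(T_0)$ by positivity. These give $(T,Y_g)\ge\epsilon_J\tr(T_0)\bigl(1-c_G(m/\tr T_0)^{1/2}\bigr)$. Hence $(T,Y_g)\ge\epsilon_J\tr(T_0)/2$ once $\tr(T_0)\ge 4c_G^2m$. The remaining $T$ have $(x_0,x_0)=O(m^2)$, so they are polynomially many, each contributing at most $1$. No regime split in $t$ and no theta decomposition is needed.

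The termwise inequality you name as the main obstacle, $(T,Y_g)\ge c\,(my_{11}+\tr(T_1Y_1))$, is false. For $\Sp_4$, let $Y_g$ be the shear by $u$ of $\mathrm{diag}(t^2,Y_1)$. Then $(T,Y_g)=mt^2+Y_1\bigl(m(u+b/m)^2+\det(T)/m\bigr)$. Take $u=-b/m$ (allowed when $|b|\lesssim m$) and let $t\to 0$: the left side is $\approx Y_1\det(T)/m$, while the right side is $\ge 2Y_1mu^2$. Only the counting statement above is true.

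\emph{Tail.} On the reduced set, $\det(Y_g)=t^2\det(Y_1)$ with $t\in\R_{>0}$ unrestricted. So $(T,Y_g)\ge r\det(T)^{1/r}\det(Y_g)^{1/r}$ gives no uniform decay. Again you must drop the $e_{11}$-direction. Consider $T\cdot n_1=(m,x_0+mx_1,T_2)$, which has determinant $\det(T)$. AM--GM and Lemma~\ref{lem:detIneq3part} give $(T,Y_g)\ge\epsilon_J\tr(T_2)\ge\epsilon_J\det(T_2)^{1/(r-1)}\ge\epsilon_J(\det(T)/m)^{1/(r-1)}$. Hence $e^{-\pi(T,Y_g)}\le e^{-\pi(T,Y_g)/2}e^{-c(\det(T)/m)^{1/(r-1)}}$. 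Applying the head estimate to $\sum e^{-\pi(T,Y_g)/2}$ then bounds the tail by $C'm^De^{-cB^{1/(r-1)}}\|\eta\|_Y$.

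This is exactly where the threshold $m\log(m)^{r-1}$ comes from: one needs $(\det(T)/m)^{1/(r-1)}\gtrsim\log m$. With your exponent $\det(T)^{1/r}$ the $\log$ factor would be superfluous, which is a symptom of the mismatch.

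Finally, ``$\rho$-factors polynomial in the eigenvalues'' is also not uniform as $t\to 0$ or $t\to\infty$. The paper instead uses $\|\rho(J(u_Tg,i1_J))^{-1}\|e^{-\pi(T,Y_g)}\le C_\rho$ for all $T>0$ and all $g$. This spends half of the exponential decay, which is why only $e^{-\pi(T,Y_g)}$ appears in the sums.
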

\begin{proof} Suppose $y \in Y$ and $g \in G(\R)$.  Then
\begin{align*}
\eta(gy) &= \rho(J(g,i 1_J))^{-1} F_{y}(g) \\
&= \sum_{T \in J^\vee, (T,e) = m} \rho(J(g,i))^{-1} a_T(y) q^T\\
&= \sum_{T \in J^\vee, (T,e) = m} \rho(J(u_T g,i))^{-1} (\rho(J(u_T,i))a_T(y)) e^{2\pi i (1_J, u_T g (i 1_J))}.
\end{align*}

The $K_M$-invariant norm on $V_{\rho}$ induces a $K_M$-invariant matrix norm on $\End(V_{\rho})$, which we also denote by $|| \cdot ||$.  For ease of notation, set $Y_g = Im(g \cdot i 1_J)$ and
\[M_{T,\rho}(g) = || \rho(J(u_T g,i 1_J))^{-1} || e^{-\pi (T, Y_{g})}\]
where here we are using the matrix norm on $\End(V_{\rho})$.  The quantity $M_{T,\rho}(g)$ is a positive number independent of the choice of $u_T$.

It follows that
\[
||\eta(gy)|| \leq \sum_{T \in J^\vee, (T,e) = m} \beta_T(y) M_{T,\rho}(g) e^{-\pi (T,Y_g)}.\]
The function $M_{1_J,\rho}(g)$ is bounded for $g \in G(\R)$.  Since $M_{1_J,\rho}(u_T g) = M_{T,\rho}(g)$, we have some $C_\rho > 0$ so that $M_{T,\rho}(g) \leq C_\rho$ for all $T \in J(\R)^\vee$ positive-definite and all $g \in G(\R)$.  We obtain, for any $B > 0$,
\begin{align}\label{eqn:boundOnEtagy}
C_\rho^{-1}||\eta(gy)|| &\leq  \sum_{T \in J^\vee, (T,e) = m} \beta_T(y) e^{-\pi(T,Y_g)} \nonumber \\
&= \sum_{T \in J^\vee, (T,e) = m, \det(T) \leq m B} \beta_T(y) e^{-\pi(T,Y_g)} + \sum_{T \in J^\vee, (T,e) = m, \det(T) \geq m B} \beta_T(y) e^{-\pi(T,Y_g)}.
\end{align}

In the notation of Lemma \ref{lem:JacobiReduction}, fix some open compact subgroup $U_1$ of $G(\A_f)$ with $U_1 X_1 = X_1$, and suppose now that $g = n_1 m_1 tk$, with $n_1 \in N_Q^{U_1}$, $m_1 \in S_Q(\epsilon_J)$, $t \in \lambda(\R^\times_{>0})$ and $k \in K$.  Suppose also that $y \in X_1 = X_J X$.  For such a $g$ and $y$, we will bound the two sums in \eqref{eqn:boundOnEtagy} separately.

Recall the direct sum decomposition $J = \Q e_{11} \oplus V_0 \oplus J_1$ of equation \eqref{eqn:J_directSum3}.  This decomposition induces a direct sum decomposition of $J^\vee$.  If $T \in J^\vee$, we write $T = (m, x_0,T_0)$ for the pieces of $T$ in this decomposition of $J^\vee$ and likewise if $S \in J$ we do the same.

Suppose then $T = (m, x_0, T_0)$.  There is $x_1 \in V_0 \subseteq J$ so that
\[T_1:=T \cdot n_1 = (m, x_0 + m x_1, T_0 + x_0^* x_1 + x_1^* x_0 + m x_1^* x_1).\]
Set $Y_1 = m_1 \cdot 1_{r-1}$ where recall $1_{r-1} = \sum_{j=2}^{r}e_{jj}$.  We have $Y_1 > \epsilon_J 1_{r-1}$. It follows that
\begin{align*}
(T, Y_g) &= (T, (n_1 m_1 t) \cdot 1_J) = (T_1, (t^2, 0, Y_1)) \\
&= ((m, x_0 + m x_1, T_0 + x_0^* x_1 + x_1^* x_0 + m x_1^* x_1), (t^2,0,Y_1)) \\
&= t^2 m + (T_0 + x_0^* x_1 + x_1^* x_0 + m x_1^* x_1,Y_1) \\
&\geq t^2 m + \epsilon_J\left(\tr(T_0) + \frac{m}{2}(x_1,x_1) + (x_0,x_1)\right).
\end{align*}

Positive-definiteness of $T = (m, x_0, T_0)$ implies $T_0 - \frac{1}{m} x_0^* x_0 > 0$.  Taking trace gives $(x_0,x_0) <  2m (T_0, 1_{r-1})$, so
\[
(x_0, x_1) \geq - ||x_0|| \cdot ||x_1|| \geq - c_{G} m^{1/2} (T_0,1_{r-1})^{1/2}.
\]
Here $c_G > 0$ is a positive constant depending only on $G$ coming from reduction theory.

Thus
\[
(T,Y_g) \geq \epsilon_J\left(\tr(T_0) - c_G m^{1/2} \tr(T_0)^{1/2}\right) = \epsilon_J\cdot\tr(T_0)\left( 1 - c_G \left(\frac{m}{\tr(T_0)}\right)^{1/2}\right).
\]

Note that if $\tr(T_0) \geq 4 c_G^2 m$, then $c_G \left(\frac{m}{\tr(T_0)}\right)^{1/2} \leq 1/2$ so $(T,Y_g) \geq \epsilon_J \tr(T_0)/2$. We obtain
\[
\sum_{T \in J_0, (T,e) = m, T > 0} e^{-\pi (T,Y_g)} \leq \sum_{T \in J_0, \tr(T_0) < 4c_G^2m }{1} + \sum_{T \in J_0, \tr(T_0) \geq 4c_G^2 m}e^{-\frac{\pi}{2}\epsilon_J \tr(T_0)}.
\]
Applying Lemma \ref{lem:deltaSum}, it is now easy to see that the right-hand side is bounded by a polynomial in $m$, say it is $O(m^D)$. Indeed, since $(x_0,x_0) < 2m \tr(T_0)$, the second sum is bounded by
\[
\sum_{T_0 > 0} C' (m \tr(T_0))^{D'} e^{-\epsilon_J \pi\tr(T_0)/2}
\]
for some $C', D' > 0$. 

We now must consider the tail of the sum \eqref{eqn:boundOnEtagy}, i.e., 
\[
\sum_{T \in J_0, (T,e) = m, \det(T) \geq m B}e^{-\pi  (T,Y_g)}.
\]
Recall that $T_1 = T\cdot n_1 = (m, x_0+mx_1, T_2)$ with $T_2 = T_0 + x_0^* x_1 + x_1^* x_0 + m x_1^* x_1$.  Then $(T_1, Y_{m_1 t}) \geq \epsilon_J (T_2, 1_{r-1})$ and
\[
(T_2, 1_{r-1}) \geq \det(T_2)^{1/(r-1)} \geq (\det(T_1)/m)^{1/(r-1)} = (\det(T)/m)^{1/(r-1)}.
\]
The first inequality is AM-GM and the spectral theorem and the second is by Lemma \ref{lem:detIneq3part}.

Now
\[
2 (T,Y_g) \geq (T,Y_g) + \epsilon_J (\det(T)/m)^{1/(r-1)}.
\]
Thus
\[
\sum_{T \in J_0, (T,e) = m, \det(T) \geq mB }e^{-\pi (T,Y_g)} \leq e^{-2\epsilon_J B^{1/(r-1)}} \left(\sum_{T} e^{-\pi (T,Y_g)/2}\right).
\]

By Lemma \ref{lem:JacobiReduction}, we may choose $g,y$ so that $||\eta||_Y = ||\eta(gy)||$.  Suppose that $\beta_T(y) \leq \epsilon$ for all $T \in J_0$ with $\det(T) \leq m B$.  Putting the pieces together, we obtain
\begin{align*}
||\eta||_Y &= ||\eta(gy)|| \\
&\leq C_\rho \sum_{T \in J^\vee, (T,e) = m} \beta_T(x) e^{-\pi (T,Y_g)} \\
&\leq C_\rho \epsilon \left(\sum_{T \in J_0, (T,e)=m, T > 0} e^{-\pi (T,Y_g)}\right) + C_\rho e^{-2\epsilon_J B^{1/(r-1)}} ||\eta||_Y \left(\sum_{\det(T) \geq m B} e^{-\pi (T,Y_g)/2}\right) \\
&\leq C'm^{D}\epsilon +  C' m^{D} ||\eta||_Y e^{-2\epsilon_J B^{1/(r-1)}}.
\end{align*}

Rearranging gives
\[
||\eta||_Y \leq \frac{C' m^{D} \epsilon}{1-C' e^{D \log(m) - 2\epsilon_J B^{1/(r-1)}}}
\]
so long as the denominator is positive.  This proves the theorem.
\end{proof}

\begin{remark} Taking $\epsilon = 0$ in Theorem \ref{thm:QSBtake1} gives an upper bound on the possible vanishing orders of Jacobi forms, see e.g., \cite[section 4]{AIP} in the case of $\Sp_4$.  It seems likely that the best possible result on vanishing orders is as follows: There is a positive constant $R_{\rho,X,U}$ with the property that if $\eta \in SJ_{\rho,m}(Y,U)$ has Fourier coefficients $a_T$, and if $a_T(x) = 0$ for all $x \in X_1$ with $\det(T) \leq R_{\rho,X,U} \cdot m$, then $\eta = 0$.  Theorem \ref{thm:QSBtake1} weakens this presumably tight bound of $O(m)$ with the presence of a power of $\log(m)$, but then gains numerical control of the supremum $||\eta||_Y$ of $\eta$.
\end{remark}

\section{Automatic convergence}\label{sec:autConv}

We now prove the automatic convergence theorem for cuspidal formal modular forms.
\begin{theorem}\label{thm:autConvCusp} Fix a finite-dimensional complex representation $\rho: M_P(\C) \rightarrow \GL(V_{\rho})$ and let $U$ be a compact open subgroup of $G(\A_f)$.  Suppose $X \subseteq G(\A_f)$ satisfies $XU = X$, $P(\Q) X = G(\A_f) = Q(\Q)X$ and $Q'(\Q) X \subseteq N_P(\A_f) R'(\Q) X$.  Then the map $S_\rho(U) \rightarrow S_\rho^{f}(X,U)$ is a $\C$-linear isomorphism.
\end{theorem}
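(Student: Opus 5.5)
Injectivity is immediate: an automorphic form is determined by its Fourier coefficients on $X$ via the $P$-symmetries and $P(\Q)X = G(\A_f)$. The work is surjectivity. Given a cuspidal formal modular form $(a_T)_T \in S^f_\rho(X,U)$, I will first prove that the normalized coefficients $\beta_T(x) = \|\rho(J(u_T,i1_J))a_T(x)\|$ grow at most polynomially in $\det(T)$, uniformly for $x$ in a finite set of representatives of $X_1/U$ (shrinking to compact open pieces as needed). Once this is known, the series $F_{g_f}(Z) = \sum_T a_T(g_f)q^T$ converges absolutely and locally uniformly on $\H_J$, giving a holomorphic function for each $g_f \in X$. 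The $P$-symmetries (property \ref{item:propertyPsymms}) then let me define $\varphi$ on $G(\A)$, since $P(\Q)X = G(\A_f)$, and this $\varphi$ is left $P(\Q)$-invariant. The $Q$-symmetries (property \ref{item:propertyQsymms}) show that the Fourier--Jacobi pieces of $\varphi$ are the Jacobi forms $\eta_m$, which are left $Q'(\Q)$-invariant. I will use the condition $Q'(\Q)X \subseteq N_P(\A_f)R'(\Q)X$ to check that the $P$-definition and the $Q'$-invariance are compatible on $Y$. This yields left invariance under the group generated by $P(\Q)$ and $Q'(\Q)$, which is all of $G(\Q)$, since two distinct maximal parabolics (together with the relevant Weyl element in $Q'$) generate. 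Moderate growth follows from the polynomial bound, and cuspidality from the support condition $T>0$.

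The heart of the argument is the polynomial bound, proved by induction on dyadic shells $\det(T) \le M\delta^{k}$. Fix $M$ large and $\delta>1$. Let $A_k$ be the maximum of $\beta_T(x)$ over $x$ in the finite set and $T\in J_0$ with $\det(T)\le M\delta^k$; the base case is a finite set, handled by Lemma \ref{lem:WhitTsupport} together with finiteness modulo $\Gamma_{P,U}$. For the inductive step, take $T$ with $M\delta^k < \det T \le M\delta^{k+1}$. Using $\beta_T(\gamma_f x) = \beta_{T\cdot\gamma}(x)$ and Proposition \ref{prop:redMP}, I replace $T$ by $T' = T\cdot\gamma t$ with $m := (T',e) \le B\det(T)^{1/r}$, at the cost of replacing $x$ by $(\gamma t)_f^{-1}x$. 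The hypothesis on $X$ lets me move this element back into $X_1$ up to $N_P(\A_f)$ and $U$, which do not change $\beta$. Now $a_{T'}$ is a coefficient of $\eta_m$. By Theorem \ref{thm:QSBtake1}, it suffices to bound $\beta_S$ for $(S,e)=m$ and $\det S \le R\,m\log(m)^{r-1}$. Since $m \lesssim \det(T)^{1/r}$ with $r\ge2$, we get $R\,m\log(m)^{r-1} \le M\delta^{k}$ once $M$ is large. So all those $\beta_S$ are at most $A_k$, and therefore
\[
A_{k+1}\le \max\bigl(A_k,\; B_{\rho,X,U}\,(B\,(M\delta^{k+1})^{1/r})^{D}A_k\bigr).
\]

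The main obstacle is that this recursion multiplies $A_k$ by a factor $\asymp \delta^{kD/r}$ at each step. Iterating gives $A_k \lesssim \delta^{Ck^2}$, which is $\exp(O(\log(\det T)^2))$: subexponential, but not polynomial. To fix this I would try the following, in order. First, I would apply the Sturm bound in a stronger form: the truncated sum involves only $S$ with $\det(S)\le m^{1+o(1)}\approx\det(T)^{1/r+o(1)}$, which lies many shells below rather than one. Then the recursion becomes $A(N)\le C N^{D/r} A(N^{1/r+o(1)})$, where $A(N)$ bounds the coefficients with $\det \le N$. Iterating this gives a geometric series in the exponent, so $A(N)\ll N^{D/(r-1)+o(1)}$, which is polynomial. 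Handling the $\log$ factor and the constant $M$ in this iteration is the delicate point. Note also that subexponential growth alone would already give convergence on $\H_J$, since $e^{-\pi(T,Y)}$ decays exponentially in $\operatorname{tr} T$. That would suffice for holomorphy, with moderate growth then recovered afterwards from automorphy, as a fallback route.
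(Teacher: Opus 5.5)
Your proposal is essentially the paper's proof, and the fix you settle on for the growth estimate is the paper's argument. Geometric shells only give $\exp(O((\log\det T)^2))$, as you correctly observe. The paper instead inducts over shells $D_0^{\delta^{n-1}}\le\det T<D_0^{\delta^{n}}$. It absorbs $\log(m)^{r-1}$ into $m^{\tau}$ for a fixed $0<\tau<r-1$, and picks $\delta>1$ with $\delta(1+\tau)/r<1$ so that the Sturm range $\det S\le B\det(T)^{(1+\tau)/r}$ lies in the previous shell. Taking $D_0$ large swallows the constants, and this yields $\beta_T(x)\ll\det(T)^{E\delta/(\delta-1)}$. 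Your fallback is also reasonable but unnecessary: subexponential growth, then moderate growth from the Koecher principle, which is how the paper itself deduces Theorem \ref{thm:autConvIntro}.

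Two points in your sketch need repair, both minor.

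First, applying the Sturm bound to $\eta_{m}$ at the point $t^{-1}x$ requires bounds on $X_J F_{e,U'}^{-1}X$, not on your fixed finite set. Bringing those points back to $X$ uses $P(\Q)X=G(\A_f)$, which involves finitely many $\delta_{(j,k)}\in M_P(\Q)$, not only $N_P(\A_f)$ and $U$. These replace $T$ by $T\cdot\delta_{(j,k)}$ and change $\det$ by a bounded factor $L$, which is absorbed into $D_0$. Without this step the domain on which you need bounds keeps growing from one induction step to the next.

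Second, comparing coefficients with $\eta_m$ via $Q'(\Q)X\subseteq N_P(\A_f)R'(\Q)X$ only gives $\varphi(\delta xg)=\varphi(xg)$ for $x\in X$. It does not give this on all of $Y$, and in any case $Q'(\A_f)X$ need not be all of $G(\A_f)$. So you still need the hypothesis $G(\A_f)=Q(\Q)X$, which your sketch never uses. The paper writes $g_f=\gamma_f x$ with $\gamma\in Q(\Q)$ and factors $\gamma=tv$ with $t\in Q(\Q)\cap M_P(\Q)$ and $v\in Q'(\Q)$. It then uses $\varphi_m(tg)=\varphi_{\nu(t)m}(g)$. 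Equivalently, $\delta\gamma=t\,(t^{-1}\delta t\,v)$ with $t^{-1}\delta t\,v\in Q'(\Q)$, because $Q'=\ker\nu$ is normal in $Q$; the $P(\Q)$-invariance of $\varphi$ then removes $t$ on both sides.
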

\begin{proof} We already know that the map is injective and $\C$-linear.  

Suppose $(a_T)_T \in S_{\rho}^{f}(X,U)$ is a formal cuspidal modular form of weight $\rho$, level $U$ and domain $X$.  By property \ref{item:propertyPsymms} of Definition \ref{def:formalMFs} and the assumption that $G(\A_f) = N_P(\A_f) M_P(\Q) X$, we can uniquely extend $a_T$ to a function $G(\A_f) \rightarrow V_{\rho}$ again satisfying the $P$-symmetries on all of $G(\A_f)$.

For $g_f \in G(\A_f)$, define
\[
\beta_T(g_f) = ||\rho(J(u_T,i 1_J)) a_T(g_f)||.
\]
The main step of the proof is to check that, for each fixed $g_f$, the $\beta_T(g_f)$ grow polynomially in $T$.  We separate out this part:

\begin{claim}\label{claim:InductiveConvergence} Notation and assumptions as above, the $\beta_T(g_f)$ grow polynomially in $T$ for each fixed $g_f \in G(\A_f)$.
\end{claim}
\begin{proof} We begin by reducing the set of $g_f$ that must be considered.  For this, note that if $n_f \in N_P(\A_f)$, then $\beta_T(n_f g_f) = \beta_T(g_f)$.  Also, if $\gamma \in M_P(\Q)$, then $\beta_T(\gamma_f g_f) = \beta_{T \cdot \gamma}(g_f)$. Since $G(\A_f) = N_P(\A_f) M_P(\Q) X$, it suffices to prove the polynomial growth for $y \in X$.

We have $X = \bigsqcup_{k=1}^{N} y_k U$.  Let $V= \cap_{k=1}^{N} y_k U y_k^{-1}$.  If $y \in X$ and $v \in V$, then $vy = yu$ for some $u \in U$.  Define $U' = M_P'(\A_f) \cap V$, and then $\Gamma_{P,U'} = M_P'(\Q) \cap U' = M_P'(\Q) \cap V$.  Let $F_{e,U'} \subseteq M_P'(\Q)$ and $B_{e,U_M} > 0$ be as in the statement of Proposition \ref{prop:redMP}.  

Set $X' = F_{e,U'}^{-1} X$.  We have $X' = \bigsqcup_{k=1}^{N'} z_k U$.  Let $V'= \cap_{k=1}^{N'} z_k U z_k^{-1}$ and $X_J' = X_J V' \supseteq X_J$.  Because $X_J$ is compact, $X_J'$ is a finite union of left $V'$ cosets, $X_J' = \bigsqcup_{j=1}^{M} r_j V'$.  Thus if $y \in X_J' X' = X_J' F_{e,U_M}^{-1} X$, then $y = r_j z_k u$ for some $u \in U$.

Since $G(\A_f) = N_P(\A_f) M_P(\Q) X$, we have
\[
r_j z_k = n_{j,k} \delta_{(j,k),f} x_{j,k}
\]
for some $n_{j,k} \in N_P(\A_f)$, $\delta_{(j,k)} \in M_P(\Q)$ and $x_{j,k} \in X$.  Thus
\[
\beta_T(y) = \beta_T(r_j z_k u) = \beta_T(n_{j,k} \delta_{(j,k),f} x_{j,k}u) = \beta_{T \cdot \delta_{(j,k)}}(x_{j,k}).
\]
Because there are only finitely many $\delta_{(j,k)}$, there is $L > 0$ so that $\det(T \cdot \delta_{(j,k)}) \leq L \det(T)$ for all $T \in J^\vee$.

Fix $\delta > 1$, sufficiently close to $1$, and $Q > 0$, $D_0 > 1$ and $E > 0$, to be determined below.  Suppose $\det(T)=D > 1$, $n \geq 1$, and $D_0^{\delta^{n-1}} \leq D < D_0^{\delta^n}$.  Define
\[f(D) = Q \cdot (D_0 \cdot D_0^{\delta} \cdots D_0^{\delta^{n-1}})^{E} = Q \cdot D_0^{E \cdot \frac{\delta^{n}-1}{\delta-1}}. \]
By inducting on $n$, we will prove that $\beta_T(x) \leq f(\det(T))$ for all $x \in X$.  As $f(D) < Q  \cdot D^{E \cdot \frac{\delta}{\delta-1}}$, this will prove the theorem.

There is a lattice $J_0 \subseteq J^\vee(\Q)$, depending on $X$, with $\beta_T(x) \neq 0$ and $x \in X$ implies $T \in J_0$.  By Proposition \ref{prop:redMP}, there is $\gamma \in \Gamma_{P,U'}$ and $t \in F_{e,U_M} \subseteq M_P'(\Q)$ so that if $T_1:=T \cdot (\gamma t)$, then $(T_1, e) \leq B_{e,U_M} \det(T)^{1/r}$.

Now if $x \in X$, then
\[
\beta_T(x) = \beta_{T_1}(t^{-1} \gamma^{-1} x) = \beta_{T_1}(t^{-1} x),
\]
as $\Gamma_{P,U'}$ was chosen so that $\gamma^{-1} x \in xU$ for $\gamma \in \Gamma_{P,U'}$ and $x \in X$.  To bound $\beta_{T_1}(t^{-1}x)$, we will apply the quantitative Sturm bound, Theorem \ref{thm:QSBtake1}.  Let $m_1= (T_1, e) \leq B_{e,U_M} \det(T)^{1/r}$.  Take any real number $\tau$ satisfying $0 < \tau < r-1$.  To apply Theorem \ref{thm:QSBtake1}, it suffices to have a bound on $\beta_S(X_J X')$ for all $S$ with 
\[\det(S) \leq m_1^{1+\tau} \leq B \det(T)^{(1+\tau)/r}.\]

Finally, suppose for the inductive step that $D_0^{\delta^{n}} \leq \det(T) < D_0^{\delta^{n+1}}$.  Choose $\delta > 1$ so that 
\[
\delta \cdot \frac{1+\tau}{r}< 1,
\]
which can be done so long as $1+\tau < r$ or $\tau < r-1$.  Now choose $D_0 > 1$ so that 
\[B L D_0^{\delta^{n+1} (1+\tau)/r} < D_0^{\delta^n}\]
for all non-negative integers $n$. Then if $\det(S) \leq B \det(T)^{(1+\tau)/r}$,
\[
\det(S \cdot \delta_{(j,k)}) \leq L \det(S) \leq B L \det(T)^{(1+\tau)/r} \leq BL D_0^{\delta^{n+1}(1+\tau)/r} \leq D_0^{\delta^n}.
\]
By the induction hypothesis, it follows that if $y \in X_J' X'$,
\[
\beta_S(y) = \beta_{S \cdot \delta_{(j,k)}}(x_{j,k}) \leq f(\det(S \cdot \delta_{(j,k)})) \leq f(D_0^{\delta^n}).
\]

By Theorem \ref{thm:QSBtake1}, we conclude that 
\[
\beta_T(x) = \beta_{T_1}(t^{-1} x) \leq B' f(D_0^{\delta^n}) m_1^D \leq B'' f(D_0^{\delta^n}) \det(T)^{D/r} \leq B'' f(D_0^{\delta^n}) D_0^{\delta^{n+1} D/r}.
\]
Thus if we choose $E$ so that $B'' D_0^{\delta^{n+1} D/r} < D_0^{\delta^{n+1} E}$ for all $n$, the induction completes.
\end{proof}

It follows from Claim \ref{claim:InductiveConvergence} that the sum
\[
\sum_{T \in J^\vee(\Q), T > 0}a_T(g_f) e^{2\pi i (T, g \cdot (i 1_J))}
\]
converges absolutely (the set of $T$ with $a_T(g_f) \neq 0$ contained within a lattice $J_{0,g_f}$) and one can define
\[
\varphi(g_fg):=\rho(J(g, i 1_J))^{-1}\sum_{T \in J^\vee(\Q), T > 0}a_T(g_f) e^{2\pi i (T, g \cdot (i 1_J))}.
\]
Then $\varphi: G(\A) \rightarrow V_\rho$ is a function with the following properties:
\begin{enumerate}
    \item For each $g_f \in G(\A_f)$, it is smooth and moderate growth as a function on $G(\R)$;
    \item it is $Z(\g)$-finite;
    \item it is right $U$-invariant.
\end{enumerate}
Moreover, it is left-invariant by $P(\Q)$.  To finish the proof, we must check that it is left-invariant by $Q'(\Q)$.

Define $\varphi_m: G(\A_f) \times G(\R) \rightarrow V_{\rho}$ as
\[
\varphi_m(g_f g) = \rho(J(g, i 1_J))^{-1}\sum_{T \in J^\vee(\Q), T > 0, (T,e) = m}a_T(g_f) e^{2\pi i (T, g \cdot (i 1_J))}.
\]
By the $Q$-symmetries, there is an $\eta_m \in SJ_{\rho,m}(Y,U)$ with $\eta_m = \varphi_m$ upon restriction to $X \times G(\R)$.  We have
\[
\eta_m(y g) = \rho(J(g,i1_J))^{-1}\sum_{T \in J^\vee(\Q), T > 0, (T,e) = m} b_{T,m}(y) q^{T}
\]
for some Fourier coefficients $b_{T,m}: Y \rightarrow V_{\rho}$.  If $v \in J(\A_f)$, $\mu \in M_P(\Q) \cap Q'(\Q)$ and $x \in X$, the Fourier coefficients $b_{T,m}$ satisfy
\[
b_{T,m}(n(v) \mu_f x) = \psi((T,v)) \rho(J(\mu_\infty, i 1_J)) b_{T,m}(x).
\]
The Fourier coefficients $a_T$ satisfy the same relations, so $a_T = b_{T,m}$ on $N_P(\A_f) R'(\Q) X$.  But we assume $Q'(\Q) X \subseteq N_P(\A_f) R'(\Q) X$.  Thus if $\delta \in Q'(\Q)$, $x\in X$ and $g \in G(\R)$ then 
\begin{equation}\label{eqn:varphimQ'inv}
\varphi_m(\delta x g)= \varphi_m(\delta_f x \delta_\infty g) = \eta_m(\delta_f x \delta_\infty g) = \eta_m(x g) = \varphi_m(x g).
\end{equation}

Let $\nu: Q\rightarrow \GL_1$ denote the action on $n_L(e) \in \g_J$, so that $Ad(q) n_L(e) = \nu(q) n_L(e)$.  Suppose $t \in Q(\Q) \cap M_P(\Q)$.  Then 
\[\varphi_m(t g_f g) = \varphi_{\nu(t) m}(g_f g)\]
for all $g_f \in G(\A_f)$ and $g \in G(\R)$.  Indeed,
\begin{align*}
\varphi_m(t g_f g) &= \rho(J(t_\infty g,i1_J))^{-1}\sum_{T \in J^\vee(\Q), (T,e) = m} a_T(t_f g_f) e^{2\pi i (T, t_\infty g \cdot i 1_J)} \\
&= \rho( J(t_\infty g, i 1_J))^{-1} \sum_{T \in J^\vee(\Q), (T,e) = m} \rho(J(t_\infty, i 1_J)) a_{T \cdot t}(g_f) e^{2\pi i (T \cdot t_\infty, g \cdot (i1_J))} \\
&=\varphi_{\nu(t) m}( g_f g).
\end{align*}

Now suppose $g_f \in G(\A_f) = Q(\Q) X$, $g \in G(\R)$ and $\delta \in Q'(\Q)$.  We check that $\varphi_m(\delta g_f g) = \varphi_m(g_f g)$.  Write $g_f = \gamma_f x$ with $\gamma \in Q(\Q)$.  There is $t \in Q(\Q) \cap M_P(\Q)$ with $\nu(t) = \nu(\gamma)$.  We have $\delta \gamma = tv$ for some $v \in Q'(\Q)$.  Thus
\begin{align*}
\varphi_m( \delta g_f g) &= \varphi_m (\delta_f \gamma_f x \delta_\infty g) \\
&=\varphi_m(t_f v_f x \delta_\infty g) \\
&=\varphi_{\nu(t) m}(v_f x t_\infty^{-1}  \delta_\infty g)\\
&=\varphi_{\nu(t) m}(x v_\infty^{-1} t_\infty^{-1} \delta_\infty g).
\end{align*}
For the last equality we use \eqref{eqn:varphimQ'inv}.  But $v^{-1} t^{-1} \delta = \gamma^{-1}$ so
\begin{align*}
\varphi_m(\delta g_f g) &= \varphi_{\nu(t) m}(x \gamma_\infty^{-1} g) \\
&=\varphi_{\nu(t)m}( t_f^{-1} \gamma_f x t_\infty^{-1} g).
\end{align*}
In this last line we use that $t^{-1} \gamma \in Q'(\Q)$.  But now
\[
\varphi_{\nu(t)m}(t_f^{-1} \gamma_f x t_\infty^{-1} g) = \varphi_{m}(\gamma_f x g) = \varphi_{m}(g_f g).
\]

This proves that $\varphi_m$ is left-invariant by $Q'(\Q)$ on all of $G(\A_f) \times G(\R)$.  It follows that $\varphi$ on $G(\A)$ is left-invariant by $Q'(\Q)$.  Since $G(\Q)$ is generated by $P(\Q)$ and $Q'(\Q)$, $\varphi$ is left-invariant by $G(\Q)$.  This proves the theorem.
\end{proof}

To derive the automatic convergence theorem for $M_\rho^{f}(X,U)$, we use the Koecher principle and a result of Baily-Borel.
\begin{proof}[Proof of Theorem \ref{thm:autConvIntro}] Let $(a_T)_T \in M_\rho^{f}(X,U)$.  By Baily-Borel \cite{bailyBorel}, there is $\ell >>0$ and a compact open subgroup $U' \subseteq U$ so that for every $P \in G(\A)$, there is a classical weight $\ell$ cusp form $f_P$ of level $U'$ with $f_P(P)\neq 0$.  Multiplying the Fourier expansion of $f_P$ with the $(a_T)_T$, we obtain a new sequence $(b_{S,P})_S$.  It is easy to see that $(b_{S,P})_S \in S_{\rho(\ell)}^{f}(X,U').$  By Theorem \ref{thm:autConvCusp}, there is an $\varphi_{\rho(\ell),P} \in S_{\rho(\ell)}(U')$ with classical Fourier coefficients equal to the $b_{S,P}$.  Here $\rho(\ell)$ is the new representation of $M_P(\C)$ that comes from tensoring $\rho$ with the $1$-dimensional representation determined by $\ell$.

Let $\eta_P \in S_{\ell}(U')$ correspond to $f_P$ and let $f_{\rho(\ell),P}$ be the classical holomorphic cusp form corresponding to $\varphi_{\rho(\ell),P}$, i.e., $f_{\rho(\ell),P}(h_f h) = \rho(\ell)(J(h,i 1_J)) \varphi_{\rho(\ell),P}(h_f h)$.  Define $\varphi: G(\A) \rightarrow V_{\rho}$ as $\varphi(P) = \eta_P^{-1}(P) \varphi_{\rho(\ell)}(P)$.  It is immediately seen that $\varphi(P)$ is independent of the choice of $f_P$ with $f_P(P) \neq 0$, and thus $\varphi(P)$ is smooth.  It is left-invariant by $G(\Q)$ because $\eta_P$ and $\varphi_{\rho(\ell)}$ are, and likewise is right-invariant by $U'$.  It satisfies that $\rho(J(h,i 1_J)) \varphi(h_f h)$ defines a holomorphic function on $\H_J$.  Thus by the Koecher principle, $\varphi$ is of moderate growth, so $\varphi \in M_{\rho}(U')$. 

For every $P$, we have $\eta_P \varphi = \varphi_{\rho(\ell),P}$.  Looking at Fourier coefficients, we obtain that the Fourier coefficients of $\varphi$ are the $a_T$.  Since they are right $U$-invariant, in fact we have $\varphi \in M_{\rho}(U)$.  This completes the proof.
\end{proof}

\bibliography{autConvHMF_bib}

	\bibliographystyle{amsalpha}
\end{document}